\documentclass[final,3p,times]{elsarticle}

\usepackage[T1]{fontenc}
\usepackage[utf8]{inputenc}
\usepackage{lmodern}
\usepackage{microtype}
\usepackage{amsmath,amssymb,mathtools}
\usepackage{amsthm}
\usepackage{enumitem}
\usepackage{xcolor}
\usepackage[colorlinks=true,linkcolor=blue!45!black,citecolor=blue!45!black,urlcolor=blue!45!black]{hyperref}
\biboptions{sort&compress}

\theoremstyle{plain}
\newtheorem{theorem}{Theorem}[section]
\newtheorem{proposition}[theorem]{Proposition}
\newtheorem{lemma}[theorem]{Lemma}
\newtheorem{corollary}[theorem]{Corollary}

\theoremstyle{definition}
\newtheorem{definition}[theorem]{Definition}
\newtheorem*{definition*}{Definition}

\newtheorem{example}[theorem]{Example}

\newcommand{\R}{\mathbb{R}}

\newcommand{\id}{\operatorname{id}}
\newcommand{\Int}{\operatorname{int}}
\newcommand{\cone}{\operatorname{cone}}
\newcommand{\conv}{\operatorname{conv}}
\newcommand{\supp}{\operatorname{supp}}
\newcommand{\ext}{\operatorname{ext}}
\newcommand{\End}{\operatorname{End}}

\newcommand{\Rc}{\mathcal R_C}

\newcommand{\restr}{\mathbin{\vrule height 1.35ex depth -0.3ex width 0.08ex\vrule height 0.08ex depth -0.02ex width 0.7ex}}
\newcommand{\dd}{\,d}
\newcommand{\one}{\mathbf 1}

\journal{Preprint}

\begin{document}

\begin{frontmatter}

\title{Cone domains separate FS-domains from RB-domains}

\author[addr1]{Yuxu Chen}
\address[addr1]{School of Mathematics, Sichuan University}
\ead{chenyuxu@scu.edu.cn}



\begin{abstract}
Let $C$ be a closed, convex, pointed and generating cone in a finite-dimensional real vector space $V$, and let
\(
        D_C=(-C)\cup\{\bot\}
\)
be the negative cone with a new least element, ordered by the cone order.  Keimel proved that these cone domains are FS-domains and asked whether they are always retracts of bifinite domains.  We give a sharp answer:
\[
        D_C\text{ is an RB-domain}\quad\Longleftrightarrow\quad C\text{ is simplicial}.
\]
Thus every non-simplicial proper cone gives an FS-domain which is not an RB-domain.  The proof converts the RB approximation property into finite-valued $C$-monotone approximations of the identity.  The analytic obstruction is elementary and finite-dimensional: first in Euclidean space, cone-upper sets are represented, up to null sets, as Lipschitz epigraphs; Rademacher's theorem, Fubini's theorem and integration by parts then force the matrix tested against any finite-valued monotone map to lie in the cone generated by the positive rank-one operators $v\otimes\ell$, $v\in C$, $\ell\in C^*$.  If such maps approximate the identity, the identity operator lies in this rank-one cone, which is possible exactly when the cone is simplicial.  This answers Keimel's question in the negative for the Lorentz cone and other non-simplicial cones.
\end{abstract}

\begin{keyword}
FS-domain \sep RB-domain \sep bifinite domain \sep cone order \sep simplicial cone \sep finite-dimensional convex cone \sep rank-one cone
\MSC[2020] 06B35 \sep 06B30 \sep 52A20 \sep 68Q55
\end{keyword}

\end{frontmatter}

\section{Introduction}

The comparison between FS-domains and retracts of bifinite domains is one of the classical approximation questions in domain theory.  An FS-domain is a domain whose identity map is the directed supremum of Scott-continuous maps finitely separated from the identity.  An RB-domain is a Scott-continuous retract of a bifinite domain, or equivalently a dcpo carrying a directed approximate identity of finite-range deflations.  Bifinite domains and RB-domains provide a robust finite-approximation framework, while FS-domains form a larger-looking class.  Whether these two approximation notions coincide has been a persistent problem since Jung's work on Cartesian closed categories of domains \cite{Jung1989}; see also Abramsky and Jung \cite{AbramskyJung1994}.

Keimel introduced a geometric family of test cases in his study of bicontinuous domains and old problems in domain theory \cite{Keimel2009}.  Let $C$ be a closed, convex, pointed and generating cone in a finite-dimensional real vector space $V$.  The cone induces the order
\[
        x\leq_C y \quad\Longleftrightarrow\quad y-x\in C.
\]
Keimel considered the dcpo
\[
        D_C=(-C)\cup\{\bot\},
\]
where $\bot$ is a new least element.  He stated that $D_C$ is always an FS-domain and asked whether it must be a retract of a bifinite domain \cite[Proposition~5.4 and Problem~5.5]{Keimel2009}.  This question is especially transparent for the three-dimensional ice-cream cone, also known as the Lorentz cone.

We prove that the answer is completely governed by the elementary convex geometry of $C$.

\begin{theorem}\label{thm:intro-main}
Let $C\subseteq V$ be a closed, convex, pointed and generating cone in a finite-dimensional real vector space.  Then
\[
        D_C=(-C)\cup\{\bot\}
\]
is an FS-domain.  Moreover,
\[
        D_C\text{ is an RB-domain}\quad\Longleftrightarrow\quad C\text{ is simplicial}.
\]
\end{theorem}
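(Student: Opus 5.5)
The proof splits into three parts: the FS property, the simplicial direction, and the non-simplicial obstruction. The third part carries the real content.

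\emph{FS property.} I would follow Keimel. Fix an interior point $e\in\Int C$. For $n\in\mathbb N$ define $f_n\colon D_C\to D_C$ by $f_n(\bot)=\bot$, by $f_n(x)=x-\tfrac1n e$ when $x\geq_C -ne$, and by $f_n(x)=\bot$ otherwise. Since $x\ll y$ in $D_C$ holds exactly when $y-x\in\Int C$ (or $x=\bot$), each $f_n$ is Scott-continuous and below the identity. The maps $f_n$ increase to $\id$. To show $f_n$ is finitely separated from $\id$, I would use the compact order interval $[-ne,0]_C$ and cover it by finitely many sets $\uparrow\!(z-\tfrac1{2n}e)$ with $z$ on a fine grid; points outside this interval are sent to $\bot$. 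So $D_C$ is an FS-domain.

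\emph{Simplicial $\Rightarrow$ RB.} If $C$ is simplicial, a linear change of coordinates makes $C=\R^d_{\ge0}$. Then $D_C$ is $(-\infty,0]^d$ with coordinatewise order and a bottom added. The deflations $g_n$ round each coordinate down to the grid $\tfrac1n\mathbb Z$, and send to $\bot$ any point with some coordinate $<-n$. Each $g_n$ is idempotent-free but has finite range, satisfies $g_n\le\id$, is Scott-continuous because downward rounding is continuous from the left in the right sense, and the $g_n$ are directed with supremum $\id$. Hence $D_C$ is an RB-domain.

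\emph{RB $\Rightarrow$ simplicial.} Suppose there is a directed family of finite-range deflations $g_i\to\id$. Restricting to a bounded region $-C\cap(e-C)$ translated into the interior, I would extract finite-valued $C$-monotone maps $h\colon U\to V$ on a bounded open set $U\subseteq\Int(-C)$ with $\sup_{x\in U}\|h(x)-x\|\to0$. Here the values are finite and $\bot$ is excluded on $U$ for large $i$ by Scott continuity and compactness. Such an $h$ is a finite sum $\sum_k v_k\one_{A_k}$ in which the superlevel pieces, after telescoping along the finite poset of values, are $C$-upper sets.

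Next, fix a smooth compactly supported $\varphi\ge0$ on $U$ and test against matrices. Define $M(h)=-\int h(x)\otimes\nabla\varphi(x)\,dx$, which is a linear map. For $h=x$ this gives $\bigl(\int\varphi\bigr)\id$.

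The key lemma is that for every $C$-upper set $A$ and every $v\in C$, the operator $-\int v\otimes\one_A\nabla\varphi$ lies in the closed cone $K$ generated by $v\otimes\ell$ with $v\in C$ and $\ell\in C^*$. To prove it, choose a direction $e\in\Int C$ and write $A$, up to a null set, as the epigraph $\{x:\ t>\psi(y)\}$ in coordinates $x=y+te$. The function $\psi$ is Lipschitz because $A+C\subseteq A$. By Rademacher's theorem, the outward normals of $A$ exist almost everywhere and lie in $-C^*$. Fubini's theorem and integration by parts then turn $-\int\one_A\nabla\varphi$ into $\int_{\partial A}\varphi\,\nu$ with $\nu\in C^*$, which lies in $C^*$.

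Since $K$ is closed, passing to the limit gives $\id\in K$.

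Finally, $\id=\sum_j v_j\otimes\ell_j$ with $v_j\in C$ and $\ell_j\in C^*$ forces $C$ to be simplicial. Indeed, every $x\in C$ can be written as $x=\sum_j\ell_j(x)v_j$, so $C=\cone\{v_j\}$ is polyhedral. Taking the trace, and using that $\ell_j(v_j)\ge0$ with each extreme ray needing a $v_j$, one gets a count showing that there are exactly $\dim V$ extreme rays. Alternatively, apply the identity to extreme vectors: $x$ extreme forces all $v_j$ with $\ell_j(x)>0$ to be multiples of $x$, which yields $\ell_j(x')=0$ for other extreme $x'$, so the generators form a basis.

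I expect the main obstacle to be the Lipschitz-epigraph representation and the rigorous integration by parts for arbitrary upper sets that are neither open nor closed. The extraction of finite-valued monotone approximants on an interior region is also delicate.
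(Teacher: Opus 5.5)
Your RB $\Rightarrow$ simplicial argument has a genuine gap where the key lemma is summed over the pieces of $h$. The key lemma gives $-\int v\otimes\one_A\nabla\varphi=v\otimes\xi_A$ with $\xi_A\in C^*$. This lies in the rank-one cone $K$ in general only when $v\in C$. So you need $h=q_0+\sum_k v_k\one_{A_k}$ with every $A_k$ a $C$-upper set \emph{and every} $v_k\in C$, and telescoping does not give this.

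\begin{itemize}
\item Along a linear extension $q_1,\dots,q_N$ of the values (e.g.\ ordered by some $\lambda\in\Int C^*$), the sets $h^{-1}\{q_{k+1},\dots,q_N\}$ are upper, but the coefficients $q_{k+1}-q_k$ need not lie in $C$.
\item With the upper sets $\{h\geq_C q_k\}$ and M\"obius inversion, the coefficients are signed combinations of values.
\end{itemize}

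Both work only if the values form a $\leq_C$-chain. For $d\geq2$ this never happens once $h$ is uniformly close to the identity: a chain has bounded length, so it cannot be $\varepsilon$-dense in an open set. Demanding coefficients in $C$ amounts to a Riesz interpolation property, which is exactly what non-simplicial cones lack. In $L_3$, $p=(0,1,0)$ and $q=(0,-1,0)$ lie below both $r=(\sqrt2,0,1)$ and $s=(\sqrt2,0,-1)$, yet no point lies above $p,q$ and below $r,s$. So $M(h)\in K$ is not established.

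The missing idea is to avoid any global decomposition and argue fibrewise along $e$, as the paper does.

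\begin{enumerate}
\item Order the values by $\lambda$. The layers $E_i=\{\lambda\circ h>s_i\}$ are upper sets.
\item In a cylinder $(a,b)e+U$ they are Lipschitz epigraphs with thresholds $f_1\leq\cdots\leq f_{N-1}$. The flux then equals $J\int_U\sum_i\psi(f_i(z)e+z)\,(q_{i+1}-q_i)\otimes(\eta-df_{i,z})\,dz$.
\item At a.e.\ $z$, group maximal blocks $r,\dots,s$ of indices with equal $f_i(z)$. The differentials coincide on a block, since each $f_{i+1}-f_i\geq0$ vanishes at $z$.
\item The block's total jump $q_{s+1}-q_r=h(t_+e+z)-h(t_-e+z)$ lies in $C$, because the line $t\mapsto te+z$ is a $\leq_C$-chain.
\end{enumerate}

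Hence the integrand lies pointwise in $K$, even though individual differences $q_{i+1}-q_i$ do not.

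Smaller points:

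\begin{itemize}
\item Your FS maps and simplicial deflations use closed thresholds ($x\geq_C-ne$; $\bot$ iff some $x_i<-n$) and floor rounding. These are not Scott-continuous: $-ne-e/k\uparrow-ne$ is sent to $\bot$ while its supremum is not, and floor fails along $t_k\uparrow0$. Use open thresholds and the largest grid point \emph{strictly} below each coordinate.
\item Finite separation needs $f_n(x)\leq_C m\leq_C x$, so $m$ must be close to $x$ from both sides, not just $m\leq_C x$.
\item To write $I=\sum_j v_j\otimes\ell_j$ as a finite sum, you need the generated rank-one cone itself to be closed, not just its closure. This requires proof, e.g.\ via compact bases of $C$ and $C^*$.
\end{itemize}

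With these repairs, your closing extreme-point argument is correct.
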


Here a proper cone is simplicial if it is generated by a basis of $V$.  Hence a non-simplicial cone gives an FS-domain which is not an RB-domain.  In particular, for the Lorentz cone
\[
        L_3=\{(t,x,y)\in\R^3:t\geq (x^2+y^2)^{1/2}\},
\]
the domain $(-L_3)_\bot$ belongs to $\mathbf{FS}\setminus\mathbf{RB}$.

The proof follows the same geometric strategy throughout.  The order-theoretic part first identifies the way-below relation in $D_C$:
\[
        x\ll y\quad\Longleftrightarrow\quad y-x\in\operatorname{int}C
        \qquad(x,y\in -C).
\]
If $D_C$ is assumed to be RB, finite-range deflations produce finite-valued $C$-monotone maps $Q_\varepsilon:P\to C$ on compact subsets $P\subseteq\operatorname{int}C$ satisfying
\[
        c\leq_C Q_\varepsilon(c)\leq_C c+\varepsilon a.
\]
The analytic step is proved first in finite-dimensional Euclidean space.  For a finite-valued monotone map, one slices the level sets by upper layers.  Each upper layer is, up to a null set, a Lipschitz epigraph.  The epigraph integration formula obtained from Rademacher's theorem and Fubini's theorem shows that, for every nonnegative test function $\psi$,
\[
        -\int Q(x)\otimes d\psi_x\,dx
        \in
        \Rc:=\operatorname{cone}\{v\otimes\ell:v\in C,\ \ell\in C^*\}\subseteq \End(V).
\]
If such maps approximate the identity locally uniformly, integration by parts gives $I_V\in\Rc$.  Finally, an elementary convex-geometric argument shows that this identity decomposition forces a compact base of $C$ to be a simplex.  Since every finite-dimensional real vector space is linearly isomorphic to a Euclidean space, the Euclidean obstruction applies to arbitrary finite-dimensional $V$ by changing coordinates.

The paper is organized as follows.  Section~\ref{sec:preliminaries} recalls the domain-theoretic and cone-theoretic notation.  Section~\ref{sec:order-approx} proves the order properties of $D_C$, Keimel's FS assertion, and the finite-valued approximation consequence of the RB hypothesis.  Section~\ref{sec:elementary-obstruction} develops the elementary finite-dimensional obstruction.  Section~\ref{sec:classification} constructs the RB approximation in the simplicial case and derives the main consequences.

\section{Domain-theoretic and cone-theoretic preliminaries}\label{sec:preliminaries}

We use standard domain-theoretic terminology as in \cite{AbramskyJung1994}.  A subset $A$ of a poset is \emph{directed} if it is nonempty and every two elements of $A$ have an upper bound in $A$.  A \emph{dcpo} is a poset in which every directed subset has a supremum.  A map between dcpos is \emph{Scott-continuous} if it is monotone and preserves directed suprema.

For elements $x,y$ of a dcpo $D$, write $x\ll y$ if, whenever $A\subseteq D$ is directed and $y\leq\sup A$, some $a\in A$ satisfies $x\leq a$.  A dcpo $D$ is \emph{continuous} if $\{x:x\ll y\}$ is directed with supremum $y$ for every $y\in D$.  In this paper a \emph{domain} means a continuous dcpo.

Let $D$ be a dcpo.  A Scott-continuous map $f:D\to D$ is \emph{finitely separated from the identity} if there is a finite set $M\subseteq D$ such that for every $x\in D$ one can choose $m\in M$ with
\[
        f(x)\leq m\leq x.
\]

\begin{definition}[FS-domain]\label{def:FS-domain}
A domain $D$ is an \emph{FS-domain} if the identity map $\id_D$ is the pointwise supremum of a directed family of Scott-continuous self-maps finitely separated from the identity.
\end{definition}

A \emph{deflation} on a dcpo $D$ is a Scott-continuous map $r:D\to D$ with finite image and $r\leq\id_D$ pointwise.  A family $(r_i)_{i\in I}$ of deflations is directed when it is directed for the pointwise order on self-maps, and its pointwise supremum is the map $x\mapsto\sup_i r_i(x)$ whenever these suprema exist.

\begin{theorem}[Finite-deflation characterization of RB-domains]\label{thm:Jung-deflation}
A dcpo is an RB-domain if and only if it admits a directed family of deflations whose pointwise supremum is the identity map.  Equivalently, an RB-domain is a dcpo admitting an approximate identity by finite-range Scott-continuous maps below the identity \cite[Theorem~4.1]{Jung1989}.
\end{theorem}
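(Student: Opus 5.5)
The plan is to prove the two implications separately. Throughout, a \emph{bifinite} domain is a dcpo whose identity is the directed supremum of idempotent deflations; equivalently, it is a bilimit of finite posets. The forward implication is a transfer of structure along the retraction. The reverse implication needs a bifinite domain to be built from the given deflations, and I expect that construction to be the main work.

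\emph{RB $\Rightarrow$ deflations.} Suppose $s:D\to E$ and $r:E\to D$ are Scott-continuous with $r\circ s=\id_D$, and $E$ carries a directed family $(p_i)$ of idempotent deflations with supremum $\id_E$. Put $d_i:=r\circ p_i\circ s$.
\begin{itemize}[leftmargin=*]
\item Each $d_i$ is Scott-continuous.
\item Its image lies in $r(\operatorname{im}p_i)$, which is finite.
\item It satisfies $d_i\le r\circ s=\id_D$, by monotonicity of $r$.
\item The family $(d_i)$ is directed because $(p_i)$ is and composition is monotone.
\item By Scott-continuity of $r$, $\sup_i d_i(x)=r(\sup_i p_i(s(x)))=r(s(x))=x$.
\end{itemize}

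\emph{Deflations $\Rightarrow$ RB.} Let $(f_i)$ be a directed family of deflations with supremum $\id_D$. The steps are as follows.
\begin{enumerate}[leftmargin=*]
\item Show that $D$ is continuous with basis $B=\bigcup_i \operatorname{im} f_i$, and that $f_i(x)\ll x$. For the latter, take $x\le\sup A$ with $A$ directed. Then $f_i(x)\le\sup_{a\in A} f_i(a)$, and since $\operatorname{im} f_i$ is finite, the directed set $f_i(A)$ has a largest element $f_i(a)\le a$. Hence $f_i(x)\le f_i(a)\le a$.
\item Prove the finite iterate lemma. For a deflation $f$ the images $\operatorname{im} f^n$ decrease and stabilize at a finite set $F$, and $f$ maps $F$ onto $F$. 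A monotone bijection of a finite poset with $f\le\id$ must be the identity: take a $\le$-minimal $y$ with $f(y)\neq y$, write $y=f(z)$, and derive a contradiction from $z\ge f(z)=y$. Hence $f^n$ is an idempotent deflation.
\item Build the bifinite cover. Consider the algebraic domain $E=\operatorname{Idl}(B,\le)$ of ideals of the basis, with the retraction pair $s(x)=\{b\in B: b\ll x\}$ and $r(I)=\sup I$. Then $r\circ s=\id_D$ by continuity, so $D$ is a retract of $E$.
\item Show that $E$ is bifinite. Using directedness, choose cofinally many $i$ and finite sets $M_i\supseteq\operatorname{im} f_i$ that are closed under the operations $f_j$ for $j\le i$ in a cofinal chain, and hence (after iterating, by step 2) carry idempotent deflations. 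Check that they are closed under minimal upper bounds in $(B,\le)$. Then, by the standard criterion (an algebraic domain whose compact elements are a directed union of finite mub-closed sets carrying idempotent projections), $E$ is bifinite.
\end{enumerate}

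The main obstacle is the last step. Plain ideal completion need not be bifinite, so the basis, or its order, must be chosen so that the finite pieces are mub-complete and come with compatible idempotent projections. If mub-closure fails in $(B,\le)$, I would instead replace the order by a rounded relation, setting $b\prec c$ iff $b\le f_i(c)$ for some $i$. In that version, $E$ is the rounded-ideal completion and the maps $f_i^{n_i}$ induce the idempotent deflations directly. This is essentially Jung's argument \cite[Theorem~4.1]{Jung1989}, which we follow.
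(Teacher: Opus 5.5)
The paper gives no proof of this statement; it quotes it from Jung (Theorem~4.1 of his 1989 work), so your argument has to stand on its own. Your direction RB $\Rightarrow$ deflations is the standard argument and is correct. Steps~1 and~3 are also correct: $B=\bigcup_i\operatorname{im}f_i$ is a basis, $f_i(x)\ll x$, and $D$ is a retract of $\operatorname{Idl}(B,\le)$.

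\textbf{The gap is Step~4, which carries the whole content of Jung's theorem.} Nothing makes the finite sets $M_i$ closed under minimal upper bounds in $(B,\le)$. The iterates $f_j^n$ are idempotent but are not the identity on $M_i$, so they do not give the projections onto finite stages that bifiniteness needs. (Separately, a directed index set need not contain a cofinal chain, though that is repairable.)

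In fact $\operatorname{Idl}(B,\le)$ need not be bifinite. Take $D=[0,1]^2$. Let $g_n$ send each coordinate to the largest point of $2^{-n}\mathbb Z\cap[0,1)$ strictly below it, fixing $0$. Put $f_n=h_n\circ g_n$ with $h_n(x)=(\max(x_1-3^{-n},0),\max(x_2-3^{-n},0))$.
\begin{itemize}
\item This is an increasing sequence of deflations with supremum $\id$.
\item The nonzero coordinates occurring in $\operatorname{im}f_n$ are the numbers $k2^{-n}-3^{-n}$. A $3$-adic check shows that $1/6$ occurs only for $n=1$ and $5/36$ only for $n=2$.
\item Hence $p=(1/6,0)$ and $q=(0,5/36)$ lie in $B$, but $(1/6,5/36)\notin B$.
\item The only minimal upper bounds of $\{p,q\}$ in $B$ are $(1/6,1/6)$ and $(7/18,5/36)$.
\item Yet for large $n$, $\operatorname{im}f_n$ contains upper bounds of $\{p,q\}$ arbitrarily close to $(1/6,5/36)$ that lie above neither.
\end{itemize}
In a bifinite domain, every upper bound of finitely many compact elements lies above a minimal (compact) one: apply an idempotent deflation fixing them. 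So $\operatorname{Idl}(B,\le)$ is not bifinite.

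\textbf{Your fallback fails outright.} By your own Step~1, ``$b\le f_i(c)$ for some $i$'' is exactly $b\ll c$. So the rounded-ideal completion of $(B,\prec)$ is just $D$ again, and the $f_i^{n_i}$ are idempotent deflations of $D$ itself. The image of an idempotent deflation consists of compact elements, and $D_C$ has no compact element except $\bot$. Indeed, for the paper's $r_n$ one has $r_n^m\equiv\bot$ for large $m$. The fallback would make every RB-domain bifinite, hence algebraic, which is false.

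\textbf{The missing idea is a cover that remembers the index.} For a chain $f_1\le f_2\le\cdots$:
\begin{itemize}
\item Let $E$ consist of sequences $z$ with $z_n\in\operatorname{im}f_n\cup\{\star\}$, such that $\{n:z_n\ne\star\}$ is a nonempty initial segment on which $z$ increases. Order $E$ componentwise with $\star$ least.
\item Truncation after $N$ terms gives directed idempotent deflations with supremum $\id_E$, so $E$ is bifinite.
\item The maps $x\mapsto(f_n(x))_n$ and $z\mapsto\sup\{z_n:z_n\ne\star\}$ exhibit $D$ as a retract of $E$.
\end{itemize}
A general directed family reduces to this form after monotone reindexing by finite subsets of the index set, with supports required to be ideals.
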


We next recall the cone terminology used below.

\begin{definition}[Cone terminology]\label{def:cone-terminology}
Let $V$ be a finite-dimensional real vector space.  The basic notions are as in  \cite{BoydVandenberghe2004}.  For a subset $S\subseteq V$, \(\cone(S)\) denotes the set of all finite nonnegative linear combinations of elements of $S$, and \(\conv(S)\) denotes the convex hull.

\begin{enumerate}[label=\textup{(\alph*)}]
\item A subset $C\subseteq V$ is a \emph{cone} if
\(
        c\in C, t\geq0
        \quad\Longrightarrow\quad
        tc\in C.
\)
\item A subset $C\subseteq V$ is \emph{convex} if
\(
        x,y\in C, 0\leq\theta\leq1
        \quad\Longrightarrow\quad
        \theta x+(1-\theta)y\in C.
\)

Thus $C$ is a convex cone if and only if
\(
        x,y\in C, s,t\geq0
        \quad\Longrightarrow\quad
        sx+ty\in C.
\)
\item A subset $C\subseteq V$ is \emph{closed} if
\(
        \overline C=C.
\)
\item A cone $C\subseteq V$ is \emph{pointed} if
\(
        C\cap(-C)=\{0\},
\)
and \emph{generating} if
\(
        C-C=V.
\)
For finite-dimensional convex cones, the generating condition is equivalent to $\Int C\neq\varnothing$.
\item In this paper a cone $C\subseteq V$ is called \emph{proper} if it is closed, convex, pointed, and generating.
\item A proper cone $C\subseteq V$ is \emph{simplicial} if there is a basis $v_1,\ldots,v_d$ of $V$ such that
\[
        C=\cone\{v_1,\ldots,v_d\}
        =\left\{\sum_{j=1}^d t_jv_j:t_j\geq0\right\}.
\]
Equivalently, a compact base of $C$ is a simplex.  The same notion is called a simplex cone, or Yudin cone, in \cite[\S1.2]{deBruyn2020}.
\end{enumerate}
\end{definition}

Let $C\subseteq V$ be a cone.  Its \emph{dual cone} is
\(
        C^*=\{\ell\in V^*: \ell(c)\geq 0\text{ for all }c\in C\}.
\)

\begin{definition}[Cone order]\label{def:cone-order}
Let $C\subseteq V$ be a cone.  The \emph{cone order} associated with $C$ is the relation
\[
        x\leq_C y
        \quad\Longleftrightarrow\quad
        y-x\in C.
\]
\end{definition}


\begin{definition}[$C$-monotone maps]\label{def:C-monotone}
Let $A\subseteq V$ and let $Q:A\to V$.  The map $Q$ is \emph{$C$-monotone} if it preserves the cone order:
\[
        x,y\in A,\quad x\leq_C y
        \quad\Longrightarrow\quad
        Q(x)\leq_C Q(y).
\]
Equivalently,
\[
        x,y\in A,\quad y-x\in C
        \quad\Longrightarrow\quad
        Q(y)-Q(x)\in C.
\]

\end{definition}

We shall use the following finite-dimensional analytic conventions.  Lebesgue measure is used throughout.  For vector-valued, covector-valued, or operator-valued functions, measurability and integration are understood componentwise after choosing coordinates.  If $O\subseteq\R^d$ is open, $C_c^\infty(O)$ denotes the smooth real-valued functions on $\R^d$ whose supports are compact subsets of $O$.  The support of a function is denoted by $\supp$.  A statement that two sets agree up to a null set is always meant with respect to the completed Lebesgue measure.

\begin{theorem}[Finite-dimensional bipolar theorem] \cite[\S2.6.1]{BoydVandenberghe2004}\label{thm:bipolar}
Let $K$ be a closed convex cone in a finite-dimensional real vector space.  Then $K^{**}=K$.
\end{theorem}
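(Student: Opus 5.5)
The inclusion $K\subseteq K^{**}$ is immediate. The reverse inclusion I would prove by strict separation, using a Euclidean projection onto $K$. Throughout, $V^{**}$ is identified with $V$ via the canonical isomorphism $x\mapsto(\ell\mapsto\ell(x))$, which is available because $V$ is finite-dimensional. With this identification $K^{**}=\{x\in V:\ell(x)\ge 0\text{ for all }\ell\in K^*\}$. If $x\in K$ and $\ell\in K^*$, then $\ell(x)\ge0$ by the definition of $K^*$, so $K\subseteq K^{**}$.

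For the converse, I fix an inner product $\langle\cdot,\cdot\rangle$ on $V$ and take $x\notin K$; the aim is an $\ell\in K^*$ with $\ell(x)<0$.

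\emph{Nearest point.} The set $K$ is nonempty, since $0\in K$. Let $R=\|x\|$. The set $K\cap\{y:\|y-x\|\le R\}$ is compact and nonempty, so the continuous function $y\mapsto\|y-x\|$ attains a minimum on it at some $p\in K$. This $p$ is also a minimiser over all of $K$, because points outside the ball are farther from $x$ than $0$ is.

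\emph{Variational inequality.} Let $c\in K$ and $t\in[0,1]$. By convexity $p+t(c-p)\in K$. Expanding $\|x-p-t(c-p)\|^2\ge\|x-p\|^2$ and letting $t\downarrow0$ gives
\[
\langle x-p,\,c-p\rangle\le 0\qquad\text{for all }c\in K.
\]

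\emph{Using the cone property.} Taking $c=0$ and $c=2p$, both of which lie in $K$ because $K$ is a cone, gives $\langle x-p,p\rangle\ge0$ and $\langle x-p,p\rangle\le0$. Hence $\langle x-p,p\rangle=0$. The variational inequality then reduces to $\langle x-p,c\rangle\le 0$ for all $c\in K$.

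\emph{The separating functional.} Define $\ell(y)=\langle p-x,y\rangle$. By the previous step $\ell\in K^*$. Moreover
\[
\ell(x)=\langle p-x,\,x-p\rangle+\langle p-x,\,p\rangle=-\|x-p\|^2<0,
\]
and $x\ne p$ because $p\in K$ while $x\notin K$. Therefore $x\notin K^{**}$, which proves $K^{**}\subseteq K$.

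The main point requiring care is the existence of the nearest point $p$. This is the only place where closedness of $K$ is used: it makes the truncated set compact. Convexity enters only through the variational inequality. The remaining steps are short algebra.
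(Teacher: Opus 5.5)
Your argument is correct and complete. The paper does not prove this statement: it quotes Boyd--Vandenberghe \S2.6.1, where $K^{**}=K$ is listed as a standard property of dual cones. The usual derivation there goes through the separating hyperplane theorem for a point and a closed convex set. The same black box appears again in the proof of Lemma~\ref{lem:cone-valued-integral}, where a functional $\Lambda\in A^*$ with $\Lambda(m)<0$ for $m\notin A$ is taken from ``the finite-dimensional separation theorem''; that is precisely the inclusion $A^{**}\subseteq A$.

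Your nearest-point construction is a self-contained proof of that separation step, specialised to cones. The projection $p$ gives the usual strict separation. Your extra identity $\langle x-p,p\rangle=0$, obtained by testing $c=0$ and $c=2p$, moves the separating hyperplane through the origin. That is why your functional lies in $K^*$ rather than merely being bounded below on $K$. The citation buys brevity. Your version shows that closedness is used only for the existence of $p$ and convexity only for the variational inequality, and it needs nothing beyond Heine--Borel and a choice of inner product.

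One small point of convention. The sentence ``$K$ is nonempty, since $0\in K$'' runs backwards: $0\in K$ follows from the cone property only once $K\neq\varnothing$. The statement genuinely needs nonemptiness, because $\varnothing$ is vacuously a closed convex cone under Definition~\ref{def:cone-terminology}(a), yet $\varnothing^*=V^*$ and $\varnothing^{**}=\{0\}$. This is harmless here, since the paper applies the theorem only to proper cones, in Lemma~\ref{lem:standard-cone-facts}(v).
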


\begin{lemma}[Standard finite-dimensional cone facts]\label{lem:standard-cone-facts}
Let $C\subseteq V$ be a proper cone.  Then:
\begin{enumerate}[label=\textup{(\roman*)}]
\item $\Int C\neq\varnothing$, and $C+\Int C\subseteq\Int C$.
\item The dual cone $C^*$ is proper, $\Int C^*\neq\varnothing$, and every $\ell\in\Int C^*$ is strictly positive on $C\setminus\{0\}$.
\item For every $\alpha\in\Int C^*$, the slice
\[
        K_\alpha=\{c\in C:\alpha(c)=1\}
\]
is a compact base of $C$.  Here this means that $K_\alpha$ is compact and convex, and every nonzero $c\in C$ has a unique representation
\[
        c=t k,\qquad t>0,\quad k\in K_\alpha .
\]
Moreover, the truncated cone
\[
        T_\alpha=\{c\in C:\alpha(c)\leq 1\}
\]
is compact.
\item The open cones $\Int C$ and $\Int C^*$ contain bases of $V$ and $V^*$, respectively.
\item The bipolar identity $C^{**}=C$ holds.
\end{enumerate}
\end{lemma}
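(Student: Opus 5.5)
We prove the five items in the order (iv)-free parts first, using only compactness arguments and the bipolar theorem, Theorem~\ref{thm:bipolar}.

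For (i), $\Int C\neq\varnothing$ is the standard equivalence between generating and nonempty interior in finite dimensions. Sketch: $C-C=V$ means $C$ spans $V$, so $C$ contains a basis $v_1,\dots,v_d$. Then $\sum_j v_j$ is an interior point, since the map $t\mapsto\sum t_jv_j$ is a linear homeomorphism and $(1,\dots,1)$ is interior to $\R^d_{\geq0}$. For $C+\Int C\subseteq\Int C$, take $c\in C$ and $x\in\Int C$ with a ball $B(x,r)\subseteq C$. Then $c+B(x,r)\subseteq C+C\subseteq C$, so $c+x$ is interior.

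Item (v) is Theorem~\ref{thm:bipolar} applied to the closed convex cone $C$.

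For (ii), $C^*$ is clearly a closed convex cone.
\begin{itemize}[label={}]
\item \emph{Pointed:} if $\pm\ell\in C^*$, then $\ell$ vanishes on $C$, hence on $C-C=V$.
\item \emph{Generating:} if $\Int C^*=\varnothing$, then $C^*$ lies in a hyperplane $\{\ell:\ell(v)=0\}$ with $v\neq0$. Then $\pm v\in C^{**}=C$, contradicting pointedness.
\item \emph{Strict positivity:} let $\ell\in\Int C^*$ and $c\in C\setminus\{0\}$. Since $C$ is pointed, $-c\notin C=C^{**}$, so there is $\mu\in C^*$ with $\mu(c)>0$. For small $\varepsilon$ we have $\ell-\varepsilon\mu\in C^*$, hence $\ell(c)\geq\varepsilon\mu(c)>0$.
\end{itemize}
This strict positivity is the key quantitative input for (iii).

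For (iii), fix a norm and let $S$ be the unit sphere. The continuous function $\alpha$ attains a positive minimum $m$ on the compact set $C\cap S$, by (ii). So $\alpha(c)\geq m\|c\|$ on $C$. Consequently $T_\alpha$ is closed and bounded by $1/m$, hence compact, and $K_\alpha$ is a closed subset of it. Convexity is clear. For $c\neq0$ in $C$, set $t=\alpha(c)>0$ and $k=c/t$. Uniqueness follows by applying $\alpha$ to $tk=t'k'$.

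For (iv), a nonempty open set spans $V$. Pick $x\in\Int C$ and a small ball $B(x,r)\subseteq\Int C$. Then $x$ and the points $x+re_j/2$ for a basis $e_j$ lie in $\Int C$. Their differences span $V$, so some $d$ of these $d+1$ points form a basis. The same argument applies to $\Int C^*$, which is nonempty by (ii).

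None of these steps is a real obstacle. The only point that needs care is strict positivity in (ii), which requires using the bipolar identity together with pointedness to separate $-c$ from $C$.
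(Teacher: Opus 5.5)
Your proof is correct, and it follows a genuinely different route from the paper's, mainly because it is self-contained. The paper cites Keimel and Boyd--Vandenberghe for (i), (ii), (v) and for the compact-base property of $K_\alpha$. It writes out only two things:
\begin{itemize}
\item compactness of $T_\alpha$, obtained as the continuous image of $[0,1]\times K_\alpha$ under $(t,k)\mapsto tk$;
\item the basis assertion (iv), by perturbing a point of a nonempty open set by small vectors, which is essentially your argument.
\end{itemize}
You instead derive everything from the bipolar theorem. Pointedness of $C^*$ comes from $C-C=V$, and $\Int C^*\neq\varnothing$ comes from pointedness of $C$ via $C^{**}=C$. The step ``$\Int C^*=\varnothing$ implies $C^*$ lies in a hyperplane'' is just the contrapositive of your argument for (i), applied to the convex cone $C^*\subseteq V^*$.

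In (iii) you run the compactness argument in the opposite direction to the paper. You first prove the coercivity bound $\alpha(c)\geq m\|c\|$ on $C$, which gives compactness of $T_\alpha$, and then get $K_\alpha$ as a closed subset of it. The paper's route is shorter and leans on standard references. Yours shows exactly where each hypothesis enters, and it yields an explicit quantitative bound.

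One correction to your closing remark: strict positivity in (ii) needs neither the bipolar identity nor pointedness. Take $\ell\in\Int C^*$ and $c\in C\setminus\{0\}$, and pick any $\mu\in V^*$ with $\mu(c)>0$; it need not lie in $C^*$. Since $\ell$ is interior, $\ell-\varepsilon\mu\in C^*$ for small $\varepsilon>0$, so $\ell(c)\geq\varepsilon\mu(c)>0$. Your own argument never actually uses $\mu\in C^*$. Pointedness matters only because it guarantees $\Int C^*\neq\varnothing$. The paper uses the dual version of this perturbation in the proof of Lemma~\ref{lem:RKclosed}.

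A trivial detail in (iv): take the $e_j$ to be unit vectors, so that $x+re_j/2\in B(x,r)$.
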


\begin{proof}
The equivalence, for finite-dimensional convex cones, between the generating condition $C-C=V$ and the solid condition $\Int C\neq\varnothing$ is recorded by Keimel \cite[Section~4, Properties~(ii)]{Keimel2009} and is also reflected in Boyd--Vandenberghe's proper-cone convention \cite[\S2.4.1]{BoydVandenberghe2004}.  Keimel's Remark~5.2 gives the compact-base construction \cite[Remark~5.2]{Keimel2009}.  The inclusion $C+\Int C\subseteq\Int C$ follows from the elementary properties of generalized inequalities in \cite[\S2.4.1]{BoydVandenberghe2004}.  The dual-cone assertions follow from the dual-cone properties in \cite[\S2.6.1]{BoydVandenberghe2004}, and the bipolar identity follows from Theorem~\ref{thm:bipolar}.

The compactness of $T_\alpha$ follows from the compact-base statement: every nonzero point of $T_\alpha$ can be written as $t k$ with $0<t\leq1$ and $k\in K_\alpha$, so
\[
        T_\alpha=\{0\}\cup\{t k:0<t\leq1,\ k\in K_\alpha\},
\]
which is the continuous image of the compact set $[0,1]\times K_\alpha$ under $(t,k)\mapsto tk$.

It remains only to spell out the basis assertion.  If $U\subseteq V$ is a nonempty open set, then $U$ contains a basis of $V$: choose $a\in U$ and choose small vectors $v_1,\ldots,v_d$ so that $a+v_1,\ldots,a+v_d$ are linearly independent and still lie in $U$.  Applying this to $U=\Int C$ gives a basis of $V$ contained in $\Int C$, and applying the same argument in $V^*$ to $\Int C^*$ gives a basis of $V^*$ contained in $\Int C^*$.
\end{proof}

Throughout the paper, $V$ has dimension $d\geq 1$, $C\subseteq V$ is proper, and
\[
        D_C=(-C)\cup\{\bot\}
\]
is ordered by the cone order on $-C$ and by declaring $\bot$ to be least.  We fix an auxiliary norm and Euclidean structure when discussing compactness and Lebesgue measure.  The order-theoretic assertions do not depend on these choices.

\section{Order properties of the cone domain and RB local approximation}\label{sec:order-approx}

\subsection{The order structure of the cone domain}

\begin{lemma}[Compact order intervals]\label{lem:compact-intervals}
If $u\leq_C v$ in $V$, then
\(
        [u,v]_C=(u+C)\cap(v-C)
\)
is compact in the Euclidean topology.
\end{lemma}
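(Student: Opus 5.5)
The interval $[u,v]_C$ is the intersection of the translates $u+C$ and $v-C$ of closed sets, so it is closed. It therefore suffices to show that it is bounded, and the plan is to trap it inside a translate of one of the truncated cones of Lemma~\ref{lem:standard-cone-facts}(iii).

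First, closedness. Translation is a homeomorphism of $V$ and $C$ is closed, so $u+C$ and $v-C=v+(-C)$ are closed. Their intersection is closed, and it is nonempty because $u$ belongs to it.

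Next, boundedness. Fix $\alpha\in\Int C^*$, which exists by Lemma~\ref{lem:standard-cone-facts}(ii). Let $x\in[u,v]_C$ and put $c=x-u\in C$. Since $v-x\in C$, we have $\alpha(v-x)\geq0$, hence
\[
        \alpha(c)=\alpha(v-u)-\alpha(v-x)\leq \alpha(v-u)=:s .
\]
Also $s\geq0$, because $v-u\in C$.

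If $s=0$, then $\alpha(c)=0$, and strict positivity of $\alpha$ on $C\setminus\{0\}$ gives $c=0$. Thus $[u,v]_C=\{u\}$, which is compact.

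If $s>0$, then $c/s\in T_\alpha$, so
\[
        [u,v]_C\subseteq u+sT_\alpha .
\]
The set $T_\alpha$ is compact by Lemma~\ref{lem:standard-cone-facts}(iii), so $u+sT_\alpha$ is compact as the image of $T_\alpha$ under a continuous affine map.

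Finally, $[u,v]_C$ is a closed subset of a compact set, so it is compact. All steps are routine; the only substantive input is the compactness of $T_\alpha$, which we take from Lemma~\ref{lem:standard-cone-facts}(iii).
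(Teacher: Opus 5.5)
Your proof is correct, but it handles boundedness differently from the paper. The paper argues by contradiction. If the interval were unbounded, it picks $x_n\in[u,v]_C$ with $\|x_n\|\to\infty$ and passes to a subsequence with $x_n/\|x_n\|\to w$, $\|w\|=1$. Dividing $x_n-u\in C$ and $v-x_n\in C$ by $\|x_n\|$ and using closedness gives $w\in C\cap(-C)$, which contradicts pointedness. You instead take a strictly positive functional $\alpha\in\Int C^*$, show $\alpha(x-u)\le\alpha(v-u)$, and trap the interval in the explicit compact set $u+\alpha(v-u)T_\alpha$. Your split into $s=0$ and $s>0$ is fine, though the first case is just the degenerate version of the second.

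The paper's route is self-contained: it uses only closedness, pointedness and sequential compactness in finite dimensions. Yours moves the real work into Lemma~\ref{lem:standard-cone-facts}(ii)--(iii). In the paper those parts rest on cited facts (the compact base and the interior of the dual cone). Moreover, compactness of a truncated cone is usually proved by the same normalization argument the paper uses here, as in the treatment of $B_K$ in Lemma~\ref{lem:RKclosed}.

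What your version buys is an explicit, quantitative enclosure: $\|x-u\|\le\alpha(v-u)\max_{T_\alpha}\|\cdot\|$ for $x\in[u,v]_C$. So the size of the interval scales linearly with $\alpha(v-u)$, in the same spirit as the scaling $[0,\varepsilon a]_C=\varepsilon[0,a]_C$ used later in Proposition~\ref{prop:Qapprox}.
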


\begin{proof}
First,
\(
        [u,v]_C=\{z:z-u\in C,\ v-z\in C\}=(u+C)\cap(v-C).
\)
Since $C$ is closed, the translate $u+C$ is closed.  Also $-C$ is closed, hence $v-C=v+(-C)$ is closed.  Therefore $[u,v]_C$ is closed.

It remains to prove boundedness.  If the interval were unbounded, we could choose $x_n\in [u,v]_C$ with $\|x_n\|\to\infty$ and, after passing to a subsequence, $x_n/\|x_n\|\to w$ with $\|w\|=1$.  Since $x_n-u\in C$ and $v-x_n\in C$, division by $\|x_n\|$ gives
\[
        \frac{x_n-u}{\|x_n\|}\in C,
        \qquad
        \frac{v-x_n}{\|x_n\|}\in C.
\]
Passing to the limit and using closedness gives $w\in C$ and $-w\in C$.  This contradicts pointedness, because $C\cap(-C)=\{0\}$ but $\|w\|=1$.  Thus the interval is closed and bounded, hence compact in finite-dimensional Euclidean topology.
\end{proof}

\begin{lemma}[Directed convergence]\label{lem:directed-convergence}
Every directed subset $A\subseteq -C$ has a supremum in $-C$.  Moreover, when $A$ is regarded as a net indexed by its own order, it converges in the Euclidean topology to $\sup A$.
\end{lemma}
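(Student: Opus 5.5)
The plan is to show that a directed subset $A\subseteq -C$ is trapped in a compact order interval, and then to get convergence from compactness together with monotonicity of the net. Fix $a_0\in A$. Every $a\in A$ satisfies $a\leq_C 0$ because $a\in -C$. Directedness lets us restrict attention to the cofinal subset $A_0=\{a\in A: a_0\leq_C a\}$, which is itself directed. Every element of $A_0$ lies in $[a_0,0]_C$, and this interval is compact by Lemma~\ref{lem:compact-intervals}. The net $(a)_{a\in A}$ and its cofinal subnet indexed by $A_0$ have the same limit points, so we may work inside $[a_0,0]_C$ throughout.

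\emph{Step 1: a cluster point exists and is an upper bound.} By compactness the net has a cluster point $s\in[a_0,0]_C\subseteq -C$. For any fixed $b\in A$, the elements $a\in A$ with $b\leq_C a$ form a cofinal set. On that set $a-b\in C$. Since $C$ is closed, passing to the limit along a subnet converging to $s$ gives $s-b\in C$, that is, $b\leq_C s$. So $s$ is an upper bound of $A$.

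\emph{Step 2: $s$ is the least upper bound.} Let $u\in D_C$ be any upper bound of $A$. Since $A$ is nonempty, $u\neq\bot$, so $u\in -C$. Then $u-a\in C$ for all $a\in A$. Taking the limit along the subnet converging to $s$, closedness of $C$ gives $u-s\in C$, so $s\leq_C u$. Hence $s=\sup A$ in $-C$, and also in $D_C$, because adding $\bot$ creates no new upper bounds. In particular the cluster point is unique: any two cluster points are both least upper bounds, hence equal by antisymmetry, which holds because $C$ is pointed.

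\emph{Step 3: convergence.} The net lives eventually in the compact set $[a_0,0]_C$ and has exactly one cluster point $s$. A standard fact is that a net in a compact Hausdorff space with a unique cluster point converges to it. Concretely, if the net did not converge to $s$, some open neighbourhood $U$ of $s$ would be missed cofinally. The corresponding subnet would then have a cluster point in the compact set $[a_0,0]_C\setminus U$. That cluster point is also a cluster point of the original net, which contradicts uniqueness.

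The main obstacle is this last step, because of the net formalism. An alternative that avoids it is to fix $\ell\in\Int C^*$. For $\varepsilon>0$, choose $a\in A$ with $\ell(s-a)<\varepsilon$; such an $a$ exists because $s$ is a cluster point. For every $b$ with $a\leq_C b$ we have $b\leq_C s$, so $s-b\in[0,s-a]_C$. Since $\ell$ is strictly positive on $C\setminus\{0\}$ (Lemma~\ref{lem:standard-cone-facts}), the truncated cone $T_\ell$ is compact, and it is bounded in norm by some constant $M$ once $\ell$ is normalized. It follows that $\|s-b\|\leq M\,\ell(s-b)\leq M\varepsilon$, which gives convergence directly.
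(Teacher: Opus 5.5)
Your proposal is correct and follows essentially the same route as the paper. You trap a cofinal tail in the compact interval $[a_0,0]_C$, use closedness of $C$ to show that any cluster point is an upper bound lying below every upper bound, and conclude convergence from uniqueness of the cluster point in a compact Hausdorff space. Your alternative last step, which combines $\ell\in\Int C^*$, the bound $\|c\|\le M\,\ell(c)$ on $C$, and monotonicity of the net, is also valid; it gives a more direct convergence argument that the paper does not use.
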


\begin{proof}
Choose $a_0\in A$.  The cofinal tail
\[
        A_0=\{a\in A:a_0\leq_C a\}
\]
is contained in the compact interval $[a_0,0]_C$.  Hence the net has a convergent subnet, say to $z\in [a_0,0]_C$.

We show that $z=\sup A$.  For any $a\in A$, directedness gives $b\in A_0$ with $a,a_0\leq_C b$.  The subnet is eventually above $b$, and closedness of the order gives $b\leq_C z$, hence $a\leq_C z$.  Thus $z$ is an upper bound.  If $w$ is any upper bound of $A$, every term of the subnet is below $w$, whence closedness gives $z\leq_C w$.  Thus $z$ is the least upper bound.  The same argument applies to every convergent subnet, so the net has a unique cluster point in a compact Hausdorff space and therefore converges to $z$.
\end{proof}

It follows that $D_C$ is a dcpo.  Indeed, a directed subset of $D_C$ either equals $\{\bot\}$, whose supremum is $\bot$, or has a cofinal non-bottom part contained in $-C$, whose supremum exists by Lemma~\ref{lem:directed-convergence}.  The following formula is the restriction to $-C$, with a bottom added, of the strict-order formula for vector-space cone orders recorded by Keimel \cite[Section 4, Properties (i)]{Keimel2009}.

\begin{lemma}[Way-below relation]\label{lem:waybelow}
For $x,y\in -C$,
\[
        x\ll y\quad\Longleftrightarrow\quad y-x\in\Int C.
\]
Moreover, $\bot\ll z$ for every $z\in D_C$.
\end{lemma}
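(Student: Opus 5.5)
We prove the two implications for $x,y\in -C$ separately, using Lemma~\ref{lem:directed-convergence} to replace suprema of directed sets by Euclidean limits. We then treat $\bot$ directly.

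\emph{Sufficiency.} Suppose $y-x\in\Int C$ and let $A\subseteq D_C$ be directed with $y\leq\sup A$. Since $y\neq\bot$, the supremum is not $\bot$, so the non-bottom part $A'=A\cap(-C)$ is cofinal and directed, and $s=\sup A'=\sup A$. By Lemma~\ref{lem:directed-convergence}, the net $A'$ converges to $s$. Now
\[
 s-x=(s-y)+(y-x)\in C+\Int C\subseteq\Int C
\]
by Lemma~\ref{lem:standard-cone-facts}(i). Because $\Int C$ is open and $a\to s$, there is some $a\in A'$ with $a-x\in\Int C\subseteq C$. This gives $x\leq_C a$.

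\emph{Necessity.} Suppose $x\ll y$. Choose $e\in\Int C$ and put $a_n=y-\tfrac1n e$. Each $a_n$ lies in $-C$, since $-a_n=-y+\tfrac1n e\in C$. The sequence is increasing because $a_{n+1}-a_n=(\tfrac1n-\tfrac1{n+1})e\in C$. We claim $\sup_n a_n=y$. Indeed, $y$ is an upper bound, and if $w$ is any upper bound then $w-a_n\in C$ for all $n$; letting $n\to\infty$ and using closedness of $C$ gives $w-y\in C$. Applying the definition of $x\ll y$ to this chain yields some $n$ with $x\leq_C a_n$. Then
\[
 y-x=(y-a_n)+(a_n-x)=\tfrac1n e+(a_n-x)\in\Int C+C\subseteq\Int C .
\]

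\emph{The bottom element.} $\bot\ll z$ holds because any directed $A$ is nonempty and $\bot\leq a$ for every $a\in A$.

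The step needing the most care is the sufficiency direction. One must reduce to the non-bottom cofinal part $A'$ so that Lemma~\ref{lem:directed-convergence} applies, and then use openness of $\Int C$ along the convergent net to pass from $s-x\in\Int C$ to an actual element of $A'$ above $x$.
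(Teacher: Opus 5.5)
Your proof is correct and follows the paper's argument essentially step for step. Sufficiency uses the Euclidean convergence of the non-bottom cofinal part, together with $C+\Int C\subseteq\Int C$ and openness of $\Int C$; necessity uses the increasing sequence $y-\tfrac1n e$, and your explicit check that $A\cap(-C)$ is directed and cofinal simply spells out what the paper leaves implicit.
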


\begin{proof}
Assume first that $y-x\in\Int C$.  Let $A\subseteq D_C$ be directed, let $z=\sup A$, and suppose $y\leq_C z$.  A cofinal tail of $A$ lies in $-C$ and converges to $z$ by Lemma~\ref{lem:directed-convergence}.  Since
\[
        z-x=(z-y)+(y-x)\in C+\Int C\subseteq \Int C,
\]
using Lemma~\ref{lem:standard-cone-facts}, the Euclidean neighbourhood $x+\Int C$ of $z$ eventually contains an element of $A$.  That element is above $x$, so $x\ll y$.

Conversely, fix $a\in\Int C$.  The sequence
\[
        y_n=y-\frac1n a
\]
is increasing in $-C$ and has supremum $y$.  If $x\ll y$, then $x\leq_C y_n$ for some $n$.  Hence
\[
        y-x=(y-y_n)+(y_n-x)\in \frac1n a+C\subseteq\Int C.
\]
The assertion for $\bot$ is immediate from the definition of $\ll$.
\end{proof}

\begin{corollary}[Continuity of the cone domain]\label{cor:DC-domain}
The dcpo $D_C$ is a continuous dcpo.  Hence $D_C$ is a domain.
\end{corollary}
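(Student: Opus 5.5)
To prove Corollary~\ref{cor:DC-domain}, I would verify the continuity condition from the definition directly. We already know $D_C$ is a dcpo, and Lemma~\ref{lem:waybelow} describes $\ll$ explicitly. So it remains to show that for every $y\in D_C$ the set $\mathord{\Downarrow}y=\{x:x\ll y\}$ is directed and has supremum $y$.

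The case $y=\bot$ is trivial. Here $\mathord{\Downarrow}\bot=\{\bot\}$, since $\bot\ll\bot$ by Lemma~\ref{lem:waybelow}. This set is directed with supremum $\bot$.

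Now let $y\in -C$. By Lemma~\ref{lem:waybelow},
\[
        \mathord{\Downarrow}y=\{\bot\}\cup\{x\in -C:\ y-x\in\Int C\}.
\]
For directedness, the set is nonempty because it contains $\bot$. Given two elements, if either is $\bot$ the other is an upper bound, and it lies in the set. For $x_1,x_2\in -C$ with $y-x_i\in\Int C$, I will produce a common upper bound inside the set.

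To do this, fix $a\in\Int C$. The set $\Int C$ is open and contains $y-x_1$ and $y-x_2$. Hence for small $t>0$ we have $y-x_i-ta\in\Int C$ for $i=1,2$. Put $z=y-ta$. Then
\[
        z-x_i=y-x_i-ta\in\Int C\subseteq C,
\]
so $x_i\leq_C z$. Also $y-z=ta\in\Int C$. Moreover $z=y-ta\in -C-C\subseteq -C$, so $z\in D_C$. Therefore $z\in\mathord{\Downarrow}y$ and $z$ is the required upper bound.

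For the supremum, note first that $y$ is an upper bound of $\mathord{\Downarrow}y$. Next, the points $y_n=y-\tfrac1n a$ lie in $\mathord{\Downarrow}y$, exactly as in the proof of Lemma~\ref{lem:waybelow}. Let $w$ be any upper bound of $\mathord{\Downarrow}y$. Then $w\neq\bot$, so $w\in -C$ and $w-y_n\in C$ for all $n$. Letting $n\to\infty$ and using that $C$ is closed gives $w-y\in C$, that is, $y\leq_C w$. Hence $\sup\mathord{\Downarrow}y=y$.

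I do not expect a real obstacle. The only point needing care is checking that the upper bound $z=y-ta$ stays in $-C$ and remains way-below $y$. This uses openness of $\Int C$ and the fact that $-C$ is closed under adding elements of $-C$.
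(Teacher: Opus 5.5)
Your proposal is correct and follows essentially the same route as the paper: both use the approximating sequence $y-\tfrac1n a$ (your $y-ta$ for small $t$) with openness of $\Int C$ for directedness, and closedness of $C$ for the supremum. Your explicit treatment of the case where one of the two way-below elements is $\bot$ is a small detail the paper leaves implicit.
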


\begin{proof}
The bottom element is compact because $\bot\ll\bot$ by Lemma~\ref{lem:waybelow}.  Let $y\in -C$ and choose $a\in\Int C$.  For $n\geq1$, put
\[
        y_n=y-\frac1n a.
\]
Then $y_n\in -C$, the sequence $(y_n)$ is increasing for the cone order, and
\[
        y-y_n=\frac1n a\in\Int C.
\]
Thus $y_n\ll y$ by Lemma~\ref{lem:waybelow}.  The element $y$ is an upper bound of the sequence.  If $w$ is another upper bound, then $w-y_n\in C$ for every $n$; passing to the limit and using closedness of $C$ gives $w-y\in C$, so $y\leq_C w$.  Hence $\sup_n y_n=y$.

It remains to check directedness of the whole approximating set below $y$.  If $x_1,x_2\ll y$, then $y-x_1$ and $y-x_2$ lie in $\Int C$.  Since $a/n\to0$ and $\Int C$ is open, for all sufficiently large $n$ one has
\[
        y-\frac1n a-x_i\in\Int C
        \qquad(i=1,2).
\]
Therefore $x_i\leq_C y_n\ll y$ for $i=1,2$.  The way-below elements below $y$ are directed and have supremum $y$.
\end{proof}

\subsection{Keimel's FS property}

Keimel states the following result in \cite[Proposition~5.4]{Keimel2009}.  We include the proof, because the truncation and finite-separation estimates are used to make explicit why the cone domains are FS-domains.

\begin{proposition}\label{prop:FS}
For every proper cone $C$, the domain $D_C$ is an FS-domain.
\end{proposition}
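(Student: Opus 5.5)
We will construct an explicit directed family of Scott-continuous maps $f_n:D_C\to D_C$, each finitely separated from the identity, with $\sup_n f_n=\id_{D_C}$. Fix $a\in\Int C$ and $\alpha\in\Int C^*$, normalized so that $\alpha(a)=1$. The natural first attempt is a translation followed by truncation. Translation $x\mapsto x-\frac1n a$ pushes points way below themselves, by Lemma~\ref{lem:waybelow}. Truncation sends to $\bot$ every point that lies too far down, that is, every $x$ with $-\alpha(x)$ large.

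Concretely, for $x\in -C$ we set
\[
        f_n(x)=\begin{cases} x-\tfrac1n a, & \alpha(x)> -n,\\ \bot, & \text{otherwise,}\end{cases}
\]
and $f_n(\bot)=\bot$. Since $\alpha$ is $C$-monotone, the set $\{x:\alpha(x)>-n\}$ is an upper set. It is also Scott-open, because directed suprema are Euclidean limits of their nets (Lemma~\ref{lem:directed-convergence}) and the set is Euclidean-open. Hence $f_n$ is monotone. It preserves directed suprema because translation is continuous and the supremum of the translates is the translate of the supremum, again by Lemma~\ref{lem:directed-convergence}.

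The monotonicity of $f_n$ in $n$ is the first delicate point. We need $f_n\le f_{n+1}$, and this does hold: either $f_n(x)=\bot$, or $x-\frac1n a\le_C x-\frac1{n+1}a$ together with $\alpha(x)>-n>-(n+1)$. We also have $\sup_n f_n(x)=x$, since eventually $\alpha(x)>-n$, and $x-\frac1n a\to x$ with supremum $x$, as in Corollary~\ref{cor:DC-domain}.

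The main obstacle is finite separation. We need a finite set $M_n$ such that for every $x$ there is some $m\in M_n$ with $f_n(x)\le m\le x$. Points mapped to $\bot$ are handled by putting $\bot\in M_n$. For the remaining points, $-x$ lies in the compact truncated cone $nT_\alpha$ of Lemma~\ref{lem:standard-cone-facts}(iii). Our plan is a compactness argument. Each $x$ in the compact set $K_n=\{x\in -C:\alpha(x)\ge -n\}$ lies in the open set $U_x=\{y: y-(x-\frac1{2n}a)\in\Int C\}$. We choose finitely many $x_1,\dots,x_k$ whose sets $U_{x_i}$ cover $K_n$, and we let the candidate elements be $m_i=x_i-\frac1{2n}a$.

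The separation inequality, however, requires $x-\frac1n a\le_C m_i$, which need not follow from $y\in U_{x_i}$ alone. We therefore refine the covering. We use the open sets
\[
        W_i=\{y:\ y-m_i\in\Int C,\ m_i-(y-\tfrac1n a)\in\Int C\},
\]
which contain $x_i$ because both differences equal $\frac1{2n}a\in\Int C$. By compactness finitely many $W_i$ cover $K_n$. We may need to check that each $m_i$ lies in $-C$; if it does not, we replace $m_i$ by a point of $-C$ in the interval, or we shrink the shift. Then $M_n=\{\bot,m_1,\dots,m_k\}$ separates $f_n$ from the identity.
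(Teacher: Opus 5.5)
Your proposal is correct and follows the paper's proof essentially step for step: same truncated translations $f_n$, same compactness of $K_n$, and your refined sets $W_i$ are exactly the paper's neighbourhoods $U_z$ with separating points $m_z=z-\frac{1}{2n}a$. Your closing hedge is unnecessary, since $-m_i=-x_i+\frac{1}{2n}a\in C+C\subseteq C$ gives $m_i\in -C$ automatically.
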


\begin{proof}
By Corollary~\ref{cor:DC-domain}, $D_C$ is a domain.  Choose $\varphi\in\Int C^*$ and $a\in\Int C$ with $\varphi(a)=1$.  For $n\geq 1$, define $f_n:D_C\to D_C$ by $f_n(\bot)=\bot$ and, for $x\in -C$,
\[
        f_n(x)=
        \begin{cases}
        x-\dfrac1n a, & \varphi(x)>-n,\\[1ex]
        \bot, & \varphi(x)\leq -n.
        \end{cases}
\]
The value $x-a/n$ lies in $-C$, because $-x+a/n\in C$.

The map $f_n$ is monotone.  Indeed, suppose $x\leq_C y$.  If $f_n(x)=\bot$, there is nothing to prove.  If $f_n(x)\ne\bot$, then $\varphi(x)>-n$ and hence
\[
        \varphi(y)=\varphi(x)+\varphi(y-x)\geq\varphi(x)>-n,
\]
because $y-x\in C$ and $\varphi\in C^*$.  Thus $f_n(y)\ne\bot$, and
\[
        f_n(y)-f_n(x)=y-x\in C.
\]
Therefore $f_n(x)\leq_C f_n(y)$.

We verify Scott-continuity.  Let $A\subseteq D_C$ be directed with supremum $z$.  If $z=\bot$, then $A=\{\bot\}$ and the assertion is immediate.  Assume $z\in -C$.  If $\varphi(z)\leq -n$, then every non-bottom $x\in A$ satisfies $x\leq_C z$ and hence $\varphi(x)\leq\varphi(z)\leq -n$; so $f_n[A]=\{\bot\}$ and $f_n(z)=\bot$.  If $\varphi(z)>-n$, then the non-bottom cofinal part of $A$ converges to $z$ by Lemma~\ref{lem:directed-convergence}.  Hence a cofinal tail lies in the open half-space $\{x:\varphi(x)>-n\}$.  On this tail $f_n(x)=x-a/n$, and the translated tail converges to $z-a/n$.  It is directed, has supremum $z-a/n$, and is cofinal in $f_n[A]$.  Thus
\[
        f_n(\sup A)=f_n(z)=z-a/n=\sup f_n[A].
\]
So $f_n$ is Scott-continuous.

The sequence $(f_n)$ is increasing pointwise.  If $f_n(x)=\bot$, this is clear.  If $x\in -C$ and $\varphi(x)>-n$, then also $\varphi(x)>-(n+1)$, and
\[
        f_{n+1}(x)-f_n(x)
        =\left(\frac1n-\frac1{n+1}\right)a\in C.
\]
For each non-bottom $x$, the inequality $\varphi(x)>-n$ holds for all sufficiently large $n$, and then $f_n(x)=x-a/n\to x$.  Since $(f_n(x))_n$ is increasing and converges to $x$, its supremum is $x$; for $x=\bot$ the supremum is also $\bot$.  Hence
\[
        \sup_n f_n=\id_{D_C}.
\]

It remains to prove finite separation.  Put
\[
        K_n=\{x\in -C:\varphi(x)\geq -n\}.
\]
The set $K_n$ is compact: under $x\mapsto -x$ it is identified with $\{c\in C:\varphi(c)\leq n\}=n\{c\in C:\varphi(c)\leq1\}$, which is compact by Lemma~\ref{lem:standard-cone-facts}.  For $z\in K_n$, set
\[
        m_z=z-\frac1{2n}a\in -C.
\]
The set
\[
        U_z=\left\{x\in V:
        z-x+\frac1{2n}a\in\Int C,
        \quad
        x-z+\frac1{2n}a\in\Int C
        \right\}
\]
is an open neighbourhood of $z$.  Since $K_n$ is compact, choose $z_1,\ldots,z_N\in K_n$ with
\[
        K_n\subseteq\bigcup_{j=1}^N U_{z_j},
\]
and put
\[
        F_n=\{\bot,m_{z_1},\ldots,m_{z_N}\}\subseteq D_C.
\]
We show that $F_n$ separates $f_n$ from the identity.  If $x=\bot$ or if $x\in -C$ with $\varphi(x)\leq -n$, then $f_n(x)=\bot\leq\bot\leq x$.  If $x\in -C$ and $\varphi(x)>-n$, then $x\in K_n$ and hence $x\in U_{z_j}$ for some $j$.  The defining inequalities for $U_{z_j}$ give
\[
        m_{z_j}-f_n(x)
        =z_j-x+\frac1{2n}a\in C,
        \qquad
        x-m_{z_j}
        =x-z_j+\frac1{2n}a\in C.
\]
Thus
\[
        f_n(x)\leq_C m_{z_j}\leq_C x.
\]
Therefore each $f_n$ is finitely separated from $\id_{D_C}$, and the increasing sequence $(f_n)$ has pointwise supremum $\id_{D_C}$.  By Definition~\ref{def:FS-domain}, $D_C$ is an FS-domain.
\end{proof}

\subsection{The RB local approximation lemma}

The next proposition extracts from the RB hypothesis a family of finite-valued monotone approximants.  This is the order-theoretic bridge to the finite-dimensional cone obstruction.

\begin{proposition}\label{prop:Qapprox}
Assume that $D_C$ is an RB-domain.  Let $a\in\Int C$, let $P\subseteq\Int C$ be compact, and let $\varepsilon>0$.  Then there is a finite-valued $C$-monotone map
\[
        Q_\varepsilon:P\longrightarrow C
\]
such that
\[
        c\leq_C Q_\varepsilon(c)\leq_C c+\varepsilon a
        \qquad(c\in P).
\]
Consequently, for some constant $M_a$ depending only on $a$ and on the chosen norm,
\[
        \sup_{c\in P}\|Q_\varepsilon(c)-c\|\leq M_a\varepsilon.
\]
\end{proposition}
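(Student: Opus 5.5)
The approach is to read off $Q_\varepsilon$ from a single deflation, after translating $P$ into $D_C$ by $c\mapsto -c$. By Theorem~\ref{thm:Jung-deflation} there is a directed family $(r_i)_{i\in I}$ of deflations of $D_C$ with $\sup_i r_i=\id_{D_C}$ pointwise. For a deflation $r$ we will set $Q(c)=-r(-c)$. Since $r\le\id$, this gives $Q(c)\ge_C c$ automatically, provided $r(-c)\ne\bot$. Since $r$ is monotone and $c\le_C c'$ means $-c'\le_C -c$, we get $r(-c')\le r(-c)$, that is $Q(c)\le_C Q(c')$; so $Q$ is $C$-monotone. Finite image of $r$ gives a finite-valued $Q$. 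The remaining task is to choose one deflation $r$ with
\[
r(-c)\ge_C -c-\varepsilon a\qquad\text{for all }c\in P.
\]
This inequality forces $r(-c)\ne\bot$, so $Q(c)\in C$, and it is exactly $Q(c)\le_C c+\varepsilon a$.

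\textbf{Pointwise step.} Put $\delta=\varepsilon/4$ and fix $c\in P$. Both $-c-\delta a$ and $-c-2\delta a$ lie in $-C$, and their difference is $\delta a\in\Int C$. Hence Lemma~\ref{lem:waybelow} gives $-c-2\delta a\ll -c-\delta a$. Since the directed family $(r_i(-c-\delta a))_i$ has supremum $-c-\delta a$, the definition of $\ll$ yields an index $i_c$ with
\[
r_{i_c}(-c-\delta a)\ge_C -c-2\delta a .
\]

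\textbf{Uniformity via compactness.} This is the only step needing care. Consider the open set
\[
W_c=\{c'\in V:\ c-c'+\delta a\in\Int C,\ c'-c+2\delta a\in\Int C\},
\]
which contains $c$. Take $c'\in P\cap W_c$. Then $-c'\ge_C -c-\delta a$, so monotonicity gives $r_{i_c}(-c')\ge_C -c-2\delta a$. Moreover
\[
(-c-2\delta a)-(-c'-\varepsilon a)=c'-c+2\delta a\in C,
\]
so $r_{i_c}(-c')\ge_C -c'-\varepsilon a$. By compactness of $P$, finitely many sets $W_{c_1},\dots,W_{c_N}$ cover $P$. Directedness of the family (pointwise order) gives one index $i$ with $r_i\ge r_{i_{c_k}}$ for all $k$. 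Then $r=r_i$ satisfies $r(-c)\ge_C -c-\varepsilon a$ for every $c\in P$, and $Q_\varepsilon(c)=-r(-c)$ has all the required properties.

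\textbf{Norm estimate.} We have $Q_\varepsilon(c)-c\in[0,\varepsilon a]_C=\varepsilon[0,a]_C$. The interval $[0,a]_C$ is compact by Lemma~\ref{lem:compact-intervals}, so $M_a=\max\{\|z\|:z\in[0,a]_C\}$ is finite and gives $\|Q_\varepsilon(c)-c\|\le M_a\varepsilon$. I expect no real obstacle: the only subtlety is making a single deflation work on all of $P$, and the way-below margin $\delta a$ handles it.
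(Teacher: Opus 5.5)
Your proof is correct and follows essentially the same route as the paper: at each $c\in P$ you take a way-below pair with margin $\tfrac{\varepsilon}{4}a$, cover $P$ by finitely many open neighbourhoods, use directedness to get a single deflation $r_i$, set $Q_\varepsilon(c)=-r_i(-c)$, and obtain the norm bound from compactness of $[0,a]_C$. The only difference is cosmetic: you use the pair $-c-\tfrac{\varepsilon}{2}a\ll -c-\tfrac{\varepsilon}{4}a$, whereas the paper uses $-c-\tfrac{3\varepsilon}{4}a\ll -c-\tfrac{\varepsilon}{2}a$.
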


\begin{proof}
By Theorem~\ref{thm:Jung-deflation}, let $(r_i)$ be a directed family of deflations on $D_C$ with $\sup_i r_i=\id_{D_C}$.

For each $c\in P$, define
\[
        u_c=-\left(c+\frac{3\varepsilon}{4}a\right),
        \qquad
        v_c=-\left(c+\frac{\varepsilon}{2}a\right).
\]
Then $v_c-u_c=\varepsilon a/4\in\Int C$, so $u_c\ll v_c$ by Lemma~\ref{lem:waybelow}.  Since $v_c=\sup_i r_i(v_c)$, choose $i_c$ such that
\[
        u_c\leq_C r_{i_c}(v_c).
\]
Set
\[
        N_c=\left\{d\in V:d-c+\frac{\varepsilon}{4}a\in\Int C,
        \quad c-d+\frac{\varepsilon}{2}a\in\Int C\right\}.
\]
Equivalently,
\[
        N_c=\left(c-\frac{\varepsilon}{4}a+\Int C\right)
        \cap\left(c+\frac{\varepsilon}{2}a-\Int C\right).
\]
Thus $N_c$ is open, being an intersection of two translates of open cones.  Also $c\in N_c$, since $(\varepsilon/4)a$ and $(\varepsilon/2)a$ lie in $\Int C$.  Hence the $N_c$ are open neighbourhoods of the points $c\in P$.  By compactness, choose a finite subcover $N_{c_1},\ldots,N_{c_m}$.  Directedness gives an index $i$ such that $r_{i_{c_k}}\leq r_i$ for $k=1,\ldots,m$.

If $d\in P$ and $d\in N_{c_k}$, then the two defining conditions of $N_{c_k}$ give exactly
\[
        u_{c_k}-\bigl(-(d+\varepsilon a)\bigr)
        =d-c_k+\frac{\varepsilon}{4}a\in C,
\]
so $-(d+\varepsilon a)\leq_C u_{c_k}$, and
\[
        (-d)-v_{c_k}=c_k-d+\frac{\varepsilon}{2}a\in C,
\]
so $v_{c_k}\leq_C -d$.  Using monotonicity and $r_i\leq\id$ gives
\[
        -(d+\varepsilon a)
        \leq_C u_{c_k}
        \leq_C r_{i_{c_k}}(v_{c_k})
        \leq_C r_i(v_{c_k})
        \leq_C r_i(-d)
        \leq_C -d.
\]
In particular $r_i(-d)\neq\bot$: otherwise the left inequality would force the genuine non-bottom element $-(d+\varepsilon a)\in -C$ to be below the least element $\bot$.  We may therefore define
\[
        Q_\varepsilon(d)=-r_i(-d).
\]
This map has finite image, because $r_i$ has finite image.  Negating the last chain reverses the cone order and gives
\[
        d\leq_C Q_\varepsilon(d)\leq_C d+\varepsilon a.
\]
If $d\leq_C e$, then $-e\leq_C -d$, so $r_i(-e)\leq_C r_i(-d)$ and hence $Q_\varepsilon(d)\leq_C Q_\varepsilon(e)$.  Thus $Q_\varepsilon$ is $C$-monotone.

Finally, the inequalities $d\leq_C Q_\varepsilon(d)\leq_C d+\varepsilon a$ say precisely that
\[
        Q_\varepsilon(d)-d\in C,
        \qquad
        \varepsilon a-(Q_\varepsilon(d)-d)\in C.
\]
Hence $Q_\varepsilon(d)-d\in[0,\varepsilon a]_C=\varepsilon[0,a]_C$.  The interval $[0,a]_C$ is compact by Lemma~\ref{lem:compact-intervals}.  Taking $M_a=\max\{\|z\|:z\in[0,a]_C\}$ proves the estimate.
\end{proof}

\section{An elementary finite-dimensional obstruction}\label{sec:elementary-obstruction}

The analytic part of the proof is independent of domain theory.  We first prove it in the Euclidean space $\R^d$.  At the end of the section we pass from Euclidean spaces to arbitrary finite-dimensional real vector spaces by a linear change of coordinates.

Let $K\subseteq\R^d$ be a proper cone.  Its dual cone is
\[
        K^*=\{\xi\in(\R^d)^*: \xi(x)\geq0\text{ for all }x\in K\}.
\]
Define the rank-one cone
\[
        \mathcal R_K=
        \cone\{v\otimes\xi:v\in K,\ \xi\in K^*\}
        \subseteq\End(\R^d),
\]
where $(v\otimes\xi)(x)=\xi(x)v$.

\subsection{The rank-one cone}

\begin{lemma}[Closedness of the rank-one cone]\label{lem:RKclosed}
The cone $\mathcal R_K$ is closed in $\End(\R^d)$.
\end{lemma}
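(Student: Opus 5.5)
The plan is the standard compact-base argument for closedness of a finitely generated-style cone. The cone $\mathcal R_K$ is generated by the compact set of normalized rank-one operators, and that set stays away from the origin. Then Carathéodory's theorem, together with a compactness argument, gives closedness.

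First fix $\alpha\in\Int K^*$ and $\beta\in\Int K$ (Lemma~\ref{lem:standard-cone-facts} applied to $K$ and, via bipolarity, to $K^*$). Consider the compact bases
\[
        B=\{v\in K:\alpha(v)=1\},\qquad B^*=\{\xi\in K^*:\xi(\beta)=1\}.
\]
Their compactness follows from Lemma~\ref{lem:standard-cone-facts}(iii), applied to $K$ and to the proper cone $K^*$, with $\beta$ regarded as an element of $\Int K^{**}$. Every nonzero generator $v\otimes\xi$ with $v\in K$, $\xi\in K^*$ can be rescaled as $t\,(v'\otimes\xi')$ with $t>0$, $v'\in B$ and $\xi'\in B^*$. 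Hence $\mathcal R_K=\cone(G)$, where $G=\{v\otimes\xi:v\in B,\ \xi\in B^*\}$. The set $G$ is compact, since it is the continuous image of $B\times B^*$.

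Next I will show that $0\notin\conv(G)$. I test against the linear functional $\Lambda(T)=\alpha(T\beta)$ on $\End(\R^d)$. For $v\otimes\xi\in G$ this gives $\Lambda(v\otimes\xi)=\xi(\beta)\alpha(v)=1$. So $\Lambda\equiv1$ on $\conv(G)$, which in particular excludes $0$.

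Now let $T_n\in\mathcal R_K$ with $T_n\to T$. Carathéodory's theorem in the space $\End(\R^d)$ of dimension $N=d^2$ lets me write
\[
        T_n=\sum_{j=1}^{N}\lambda_{n,j}g_{n,j},\qquad \lambda_{n,j}\geq0,\quad g_{n,j}\in G.
\]
Applying $\Lambda$ gives $\sum_j\lambda_{n,j}=\Lambda(T_n)\to\Lambda(T)$, so the coefficients are bounded. Passing to a subsequence, $\lambda_{n,j}\to\lambda_j\geq0$ and $g_{n,j}\to g_j\in G$, using compactness of $G$. Therefore $T=\sum_j\lambda_j g_j\in\mathcal R_K$.

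The only step needing care is the coefficient bound, since Carathéodory alone gives no control on the $\lambda_{n,j}$. The functional $\Lambda$, which is constant on the compact generating set, is exactly what supplies this bound. Everything else is routine compactness.
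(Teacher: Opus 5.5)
Your proof is correct and follows essentially the paper's argument: the same compact normalized bases of $K$ and $K^*$, the same normalizing functional $T\mapsto\alpha(T\beta)$ that is identically $1$ on the normalized generators, and the same compactness extraction. The differences are cosmetic. You apply Carath\'eodory directly to get representations with a bounded number of generators, whereas the paper uses compactness of $\conv S$ (itself a Carath\'eodory consequence) and writes each element as $tA$ with $A\in\conv S$. Your version also lets you skip the paper's separate case $\Phi(T)=0$.
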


\begin{proof}
Choose $e\in\Int K$ and $\alpha\in\Int K^*$.  The normalized bases
\[
        B_K=\{v\in K:\alpha(v)=1\},
        \qquad
        B_{K^*}=\{\xi\in K^*:\xi(e)=1\}
\]
are compact.  For $B_K$, closedness is immediate.  If $B_K$ were unbounded, a normalized unbounded sequence would have a nonzero limit $u\in K$ with $\alpha(u)=0$, contradicting the strict positivity of $\alpha$ on $K\setminus\{0\}$.  The proof for $B_{K^*}$ is the same, using the fact that every nonzero $\xi\in K^*$ satisfies $\xi(e)>0$ when $e\in\Int K$: if $\xi(e)=0$ and $\xi\ne0$, choose $y$ with $\xi(y)>0$; then $e-ty\in K$ for all sufficiently small $t>0$, contradicting $0\leq\xi(e-ty)=-t\xi(y)$.

Let
\[
        S=\{v\otimes\xi:v\in B_K,
        \xi\in B_{K^*}\}.
\]
The set $S$ is compact, and so is $C_0=\conv S$.  Every nonzero generator $v\otimes\xi$ of $\mathcal R_K$ is a positive scalar multiple of an element of $S$: if $v\ne0$ and $\xi\ne0$, then $\alpha(v)>0$ and $\xi(e)>0$, and
\[
        v\otimes\xi
        =\alpha(v)\xi(e)
        \left(\frac{v}{\alpha(v)}\otimes\frac{\xi}{\xi(e)}\right).
\]
Thus
\[
        \mathcal R_K=\{tA:t\geq0,
        A\in C_0\}.
\]
Define $\Phi\in\End(\R^d)^*$ by $\Phi(T)=\alpha(Te)$.  For every $A\in C_0$, one has $\Phi(A)=1$.  Suppose $t_nA_n\to T$, where $t_n\ge0$ and $A_n\in C_0$.  Then
\[
        t_n=\Phi(t_nA_n)\longrightarrow\Phi(T).
\]
If $\Phi(T)=0$, then $t_n\to0$ and boundedness of $C_0$ gives $T=0\in\mathcal R_K$.  If $\Phi(T)>0$, then compactness of $C_0$ gives a subsequence $A_{n_j}\to A\in C_0$, and hence $T=\Phi(T)A\in\mathcal R_K$.  Therefore $\mathcal R_K$ is closed.
\end{proof}

\begin{lemma}[Identity criterion]\label{lem:identity-simplicial-euclidean}
For a proper cone $K\subseteq\R^d$,
\[
        I_d\in\mathcal R_K
        \quad\Longleftrightarrow\quad
        K\text{ is simplicial}.
\]
Consequently, if $K$ is not simplicial, then
\[
        I_d\notin\mathcal R_K.
\]
\end{lemma}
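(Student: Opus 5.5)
The easy direction goes first. If $K=\cone\{v_1,\ldots,v_d\}$ for a basis $v_1,\ldots,v_d$, let $\xi_1,\ldots,\xi_d$ be the dual basis, so $\xi_i(v_j)=\delta_{ij}$. Each $\xi_i$ is nonnegative on every $\sum_j t_jv_j$ with $t_j\ge0$, hence $\xi_i\in K^*$. Then
\[
I_d=\sum_{i=1}^d v_i\otimes\xi_i\in\mathcal R_K .
\]

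For the converse, assume
\[
I_d=\sum_{i=1}^m v_i\otimes\xi_i
\]
with $v_i\in K\setminus\{0\}$ and $\xi_i\in K^*\setminus\{0\}$; dropping zero terms, this is what membership in the cone means. The plan is to show that every extreme ray of $K$ is spanned by some $v_i$. Then $K=\cone\{v_1,\ldots,v_m\}$, because a closed pointed cone is generated by its extreme rays (Krein--Milman applied to the compact base $K_\alpha$ of Lemma~\ref{lem:standard-cone-facts}). So $K$ is polyhedral, and it remains to count extreme rays.

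Let $x$ span an extreme ray. Then $x=\sum_i\xi_i(x)v_i$, where every coefficient $\xi_i(x)\ge0$ and every $v_i\in K$. Extremality forces each term $\xi_i(x)v_i$ with $\xi_i(x)>0$ to be a nonnegative multiple of $x$, and some term is nonzero since $x\neq0$. Hence $x$ is parallel to some $v_i$.

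Now let $x_1,\ldots,x_p$ span the distinct extreme rays, and choose them among the $v_i$. The main obstacle is to show that they are linearly independent, which gives $p\le d$ and, since $K$ is generating, $p=d$, i.e.\ $K$ is simplicial. I would use the trace. Regrouping the decomposition, we may write
\[
I_d=\sum_{j=1}^p x_j\otimes\eta_j ,
\]
where $\eta_j\in K^*$ is the sum of the suitably rescaled $\xi_i$ over those $i$ with $v_i$ parallel to $x_j$. Pairing is not immediate, so the more robust route is to apply $I_d$ to each $x_k$:
\[
x_k=\sum_{j}\eta_j(x_k)\,x_j ,
\]
with all coefficients $\eta_j(x_k)\ge0$. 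Extremality of $x_k$, together with the rays being distinct, forces $\eta_j(x_k)=0$ for $j\ne k$ and $\eta_k(x_k)=1$. So $\eta_j(x_k)=\delta_{jk}$, which makes the $x_k$ linearly independent: applying $\eta_j$ to a vanishing linear combination kills every coefficient. Since $K=\cone\{x_1,\ldots,x_p\}$ is generating, the $x_k$ span $\R^d$, so they form a basis and $K$ is simplicial.

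The final sentence of the lemma is just the contrapositive of this converse direction.
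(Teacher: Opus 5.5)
Your argument is essentially the paper's. The paper normalizes the $v_j$ onto the compact base $B=\{x\in K:\alpha(x)=1\}$ and works with extreme points of the polytope $B=\conv\{p_j\}$, while you work with extreme rays of $K$. The key step is the same in both: evaluate the decomposition at an extreme generator and use extremality to obtain a biorthogonal system $\eta_j(x_k)=\delta_{jk}$.

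One step needs a line of justification. The regrouped identity $I_d=\sum_{j}x_j\otimes\eta_j$ silently assumes that every $v_i$ is parallel to some extreme generator $x_j$. That is not automatic for a general element of $\mathcal R_K$. It is true here: since $\xi_i\neq0$ and the $x_k$ span $\R^d$, some $\xi_i(x_k)>0$. Extremality of $x_k$ in $x_k=\sum_l\xi_l(x_k)v_l$ then forces $v_i$ to be a positive multiple of $x_k$.

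Alternatively, you can bypass the identity entirely, as the paper does with its $\beta_q$. Keep your definition of $\eta_j$ and evaluate the original decomposition at $x_k$. Only indices with $v_i$ parallel to $x_k$ contribute, so you read off $\eta_j(x_k)=\delta_{jk}$ directly, and the rest of your argument goes through unchanged.
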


\begin{proof}
Assume first that $I_d\in\mathcal R_K$.  Then
\[
        I_d=\sum_{j=1}^m v_j\otimes\xi_j,
        \qquad
        v_j\in K,
        \quad
        \xi_j\in K^*,
\]
where zero summands have been removed.  Choose $\alpha\in\Int K^*$ and let
\[
        B=\{x\in K:\alpha(x)=1\}
\]
be the compact base of $K$.  Since $v_j\ne0$ and $\alpha$ is strictly positive on $K\setminus\{0\}$, the following normalization is valid:
\[
        p_j=\frac{v_j}{\alpha(v_j)}\in B,
        \qquad
        \eta_j=\alpha(v_j)\xi_j\in K^*.
\]
For $x\in B$,
\[
        x=I_dx=\sum_{j=1}^m\eta_j(x)p_j,
        \qquad
        \eta_j(x)\geq0,
        \qquad
        \sum_{j=1}^m\eta_j(x)=\alpha(x)=1.
\]
Thus every $x\in B$ is a convex combination of $p_1,\ldots,p_m$.  The reverse inclusion $\conv\{p_1,\ldots,p_m\}\subseteq B$ follows from convexity of $B$, so
\[
        B=\conv\{p_1,\ldots,p_m\}.
\]
Let $E=\ext B$.  Since $B$ is a polytope, $E$ is finite, each element of $E$ is one of the points $p_j$, and $B=\conv E$.  If $q\in E$ and $\eta_j(q)>0$, the displayed convex representation of the extreme point $q$ forces $p_j=q$; otherwise $q$ would be a nontrivial convex combination involving another point of $B$.

For $q\in E$, define
\[
        \beta_q=\sum_{\{j:p_j=q\}}\eta_j\in K^*.
\]
If $q,r\in E$, then the preceding observation gives
\[
        \beta_q(r)=
        \begin{cases}
        1,&q=r,\\
        0,&q\ne r.
        \end{cases}
\]
Indeed, for $r$ fixed, only the indices with $p_j=r$ can have $\eta_j(r)>0$, and their total coefficient is $1$.  Hence the points of $E$ are linearly independent: if $\sum_{r\in E}\lambda_r r=0$, applying $\beta_q$ gives $\lambda_q=0$ for every $q\in E$.  Since $K=\cone B=\cone E$ and $K-K=\R^d$, the set $E$ spans $\R^d$.  Therefore $E$ is a basis of $\R^d$, and $K$ is simplicial.

Conversely, suppose $K=\cone\{b_1,\ldots,b_d\}$ for a basis $b_1,\ldots,b_d$ of $\R^d$.  Let $\beta_1,\ldots,\beta_d$ be the dual basis.  If $x=\sum_i t_i b_i\in K$ with $t_i\ge0$, then $\beta_i(x)=t_i\ge0$, so $\beta_i\in K^*$ for each $i$.  The usual basis expansion gives
\[
        I_d=\sum_{i=1}^d b_i\otimes\beta_i\in\mathcal R_K.
\]
\end{proof}

\begin{lemma}[Cone-valued integration]\label{lem:cone-valued-integral}
Let $M$ be a finite-dimensional real vector space, let $A\subseteq M$ be a closed convex cone, let $U\subseteq\R^n$ be measurable, and let $F:U\to M$ be integrable.  If $F(x)\in A$ for almost every $x\in U$, then
\[
        \int_U F(x)\dd x\in A.
\]
\end{lemma}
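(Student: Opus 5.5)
The plan is to argue by duality, using the bipolar theorem (Theorem~\ref{thm:bipolar}) applied to the closed convex cone $A\subseteq M$. Write $I=\int_U F(x)\dd x\in M$; this is well defined componentwise because $F$ is integrable. By Theorem~\ref{thm:bipolar}, $A=A^{**}$, so it suffices to show that $\lambda(I)\geq 0$ for every $\lambda\in A^*\subseteq M^*$.

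First, linearity of the componentwise integral gives
\[
        \lambda(I)=\int_U \lambda(F(x))\dd x .
\]
To see this, choose a basis of $M$, write $F=\sum_k F_k e_k$ with integrable real components $F_k$, and note that $\lambda(F(x))=\sum_k\lambda(e_k)F_k(x)$ is an integrable real function whose integral is $\sum_k\lambda(e_k)\int_U F_k$. This equals $\lambda(I)$.

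Second, since $F(x)\in A$ for almost every $x$ and $\lambda\in A^*$, the real function $\lambda\circ F$ is nonnegative almost everywhere on $U$. Its Lebesgue integral is therefore nonnegative, so $\lambda(I)\geq0$.

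Since $\lambda\in A^*$ was arbitrary, $I\in A^{**}=A$. The main point is the use of closedness: without it, $A^{**}$ would only be the closure of $A$, and the argument would fail. That is the only step needing care, and it is exactly what Theorem~\ref{thm:bipolar} supplies. Measurability of the scalar functions and the case $U$ of measure zero (where $I=0\in A$) are routine.
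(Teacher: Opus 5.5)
Your proof is correct and is essentially the paper's argument. The paper argues by contradiction: for $m=\int_U F\notin A$ it takes a separating functional $\Lambda\in A^*$ with $\Lambda(m)<0$, which is exactly the content of the bipolar identity $A=A^{**}$ (Theorem~\ref{thm:bipolar}) that you invoke directly. It then uses the same two steps you do, namely $\Lambda(m)=\int_U\Lambda(F(x))\dd x$ and the almost-everywhere nonnegativity of $\Lambda\circ F$.
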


\begin{proof}
Put $m=\int_U F(x)\dd x$.  Suppose that $m\notin A$.  Since $A$ is a closed convex cone, the finite-dimensional separation theorem gives a linear functional $\Lambda\in M^*$ such that
\[
        \Lambda(m)<0,
        \qquad
        \Lambda(a)\geq0\quad(a\in A).
\]
Equivalently, $\Lambda$ belongs to the dual cone $A^*$ but is negative on $m$.  Since $F(x)\in A$ almost everywhere, $\Lambda(F(x))\ge0$ almost everywhere.  By linearity of finite-dimensional vector-valued integration,
\[
        \Lambda(m)
        =\Lambda\left(\int_U F(x)\dd x\right)
        =\int_U\Lambda(F(x))\dd x
        \geq0,
\]
contradicting $\Lambda(m)<0$.  Hence $m\in A$.
\end{proof}

\subsection{Lipschitz epigraphs for upper sets}

Fix $e\in\Int K$ and choose $\eta\in\Int K^*$ with $\eta(e)=1$.  Put
\[
        W=\ker\eta,
        \qquad
        \R^d=\R e\oplus W.
\]
Thus every $x\in\R^d$ is uniquely written as
\[
        x=te+z,
        \qquad
        t=\eta(x),
        \quad
        z\in W.
\]
Lebesgue measure in $\R^d$ is a fixed positive multiple $J\,\dd t\dd z$ of product measure under this splitting.

Let $a<b$, let $U\subseteq W$ be open, and put
\[
        Z=(a,b)e+U.
\]
A set $E\subseteq Z$ is called $K$-upper in $Z$ if
\[
        x\in E,
        \quad y\in Z,
        \quad y-x\in K
        \quad\Longrightarrow\quad
        y\in E.
\]

\begin{lemma}[Upper sets are Lipschitz epigraphs]\label{lem:upper-lipschitz-epigraph}
Let $E\subseteq Z$ be $K$-upper in $Z$.  Then there is a Lipschitz function
\[
        f_E:U\to[a,b]
\]
such that, with
\[
        G_{f_E}=\{te+z:z\in U,
        f_E(z)<t<b\},
\]
the set $E$ is Lebesgue measurable and $E\triangle G_{f_E}$ has Lebesgue measure zero in $Z$.  Moreover, for every $z\in U$ and every $t\in(a,b)$ with $t\ne f_E(z)$,
\[
        te+z\in E
        \quad\Longleftrightarrow\quad
        t>f_E(z).
\]
\end{lemma}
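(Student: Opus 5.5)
The plan is to define $f_E$ through the vertical lines. For $z\in U$ put
\[
        f_E(z)=\inf\bigl(\{t\in(a,b):te+z\in E\}\cup\{b\}\bigr)\in[a,b].
\]
Since $e\in K$, the set $E$ is upper along each vertical line: if $te+z\in E$ and $t<s<b$, then $(s-t)e\in K$, so $se+z\in E$. Hence $\{t\in(a,b):te+z\in E\}$ is an interval of the form $(f_E(z),b)$ or $[f_E(z),b)$. This already gives the displayed equivalence $te+z\in E\Leftrightarrow t>f_E(z)$ for every $t\ne f_E(z)$.

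Next I prove that $f_E$ is Lipschitz. Since $e\in\Int K$, there is $\rho>0$ such that $e+w\in K$ for all $w\in W$ with $\|w\|\le\rho$. By homogeneity, $se+w\in K$ whenever $s\ge L\|w\|$, where $L=1/\rho$. Take $z,z'\in U$ and any $t$ with $te+z\in E$ and $t<b$. For $t'=t+L\|z'-z\|$ we get $(t'e+z')-(te+z)=(t'-t)e+(z'-z)\in K$, so if $t'<b$ then $t'e+z'\in E$, giving $f_E(z')\le t+L\|z-z'\|$. Taking the infimum over $t$ yields $f_E(z')\le f_E(z)+L\|z-z'\|$.

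Two boundary situations need care. If $t'\ge b$, then $f_E(z')\le b\le t'$ still holds trivially. If $f_E(z)=b$ because the fiber is empty, the bound is again trivial. Exchanging the roles of $z$ and $z'$ gives the Lipschitz estimate. One subtlety is that $U$ need not be convex, but the argument never uses segments in $U$, only the two endpoints, so this causes no problem. The Lipschitz constant is also taken with respect to a fixed norm on $W$.

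Finally, measurability follows by sandwiching. Being Lipschitz, $f_E$ is continuous, so $G_{f_E}$ is open in $Z$. The graph $\Gamma=\{f_E(z)e+z:z\in U\}$ has measure zero by Fubini in the splitting $J\,\dd t\dd z$, since each vertical fiber is a single point. By the fiberwise characterization,
\[
        G_{f_E}\subseteq E\subseteq G_{f_E}\cup\Gamma.
\]
Hence $E\triangle G_{f_E}\subseteq\Gamma$ is a subset of a null set, which is measurable for completed Lebesgue measure, and so $E$ is measurable.

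I expect the main obstacle to be the Lipschitz step, specifically getting the cone-interior constant right and handling the truncation at $b$ and empty fibers. The other steps are essentially bookkeeping.
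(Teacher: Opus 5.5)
Your proposal is correct and follows the paper's proof essentially step by step. It uses the same fiberwise infimum $f_E$, the same Lipschitz estimate from a ball around $e$ in $\Int K$ (with the truncation at $b$ and the empty-fiber case handled the same way), and the same conclusion that $E\triangle G_{f_E}$ lies in the null graph of $f_E$, so that completeness gives measurability. The one point you leave implicit, which the paper states, is that $t'e+z'$ lies in $Z$ (because $a<t\le t'<b$ and $z'\in U$); this is needed since $E$ is only assumed upper relative to $Z$.
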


\begin{proof}
For $z\in U$, set
\[
        E_z=\{t\in(a,b):te+z\in E\}.
\]
Because $e\in K$, the set $E_z$ is an upper subset of the interval $(a,b)$: if $t\in E_z$ and $t<t'<b$, then $(t'e+z)-(te+z)=(t'-t)e\in K$, and upperness gives $t'\in E_z$.

Define
\[
        f_E(z)=\inf E_z,
\]
with the convention $f_E(z)=b$ when $E_z=\varnothing$.  If $E_z=(a,b)$, this gives $f_E(z)=a$.  The preceding upper-set property implies the exact section statement
\[
        t<f_E(z)\Longrightarrow te+z\notin E,
        \qquad
        t>f_E(z)\Longrightarrow te+z\in E.
\]
Thus $E_z$ differs from $(f_E(z),b)$ by at most the single threshold point $f_E(z)$.

We next prove that $f_E$ is Lipschitz.  Since $e\in\Int K$, there is $\rho>0$ such that $e+u\in K$ whenever $\|u\|<\rho$.  Choose $L>1/\rho$.  Then
\[
        L\|w\|e+w\in K
        \qquad(w\in W).
\]
Indeed, this is immediate for $w=0$, and for $w\ne0$ it follows by writing
\[
        L\|w\|e+w=L\|w\|\left(e+\frac{w}{L\|w\|}\right).
\]

Let $z,z'\in U$ and put $w=z'-z$.  Suppose first that
\[
        f_E(z)+L\|w\|<b.
\]
Choose $t$ with $f_E(z)<t<b-L\|w\|$.  Then $te+z\in E$, and
\[
        (t+L\|w\|)e+z'-(te+z)=L\|w\|e+w\in K.
\]
The point $(t+L\|w\|)e+z'$ lies in $Z$, so upperness gives $(t+L\|w\|)e+z'\in E$.  Hence
\[
        f_E(z')\leq t+L\|w\|.
\]
Letting $t\downarrow f_E(z)$ gives
\[
        f_E(z')\leq f_E(z)+L\|z'-z\|.
\]
If $f_E(z)+L\|z'-z\|\ge b$, the same inequality follows from the trivial bound $f_E(z')\le b$.  Interchanging $z$ and $z'$ yields
\[
        |f_E(z')-f_E(z)|\leq L\|z'-z\|,
\]
so $f_E$ is Lipschitz.

Since $f_E$ is continuous, $G_{f_E}$ is Borel.  The exact section statement gives
\[
        E\triangle G_{f_E}
        \subseteq
        \{f_E(z)e+z:z\in U\}\cap Z.
\]
The set on the right is the image of the graph of the Lipschitz function $f_E$ under the linear map $(t,z)\mapsto te+z$, hence it has $d$-dimensional Lebesgue measure zero.  Lebesgue measure is complete, so every subset of this null graph is measurable and null.  Consequently $E$ is measurable and differs from $G_{f_E}$ by a null set.
\end{proof}

\begin{lemma}[Epigraph integration formula]\label{lem:epigraph-integration}
Let $f:U\to[a,b]$ be Lipschitz and put
\[
        G_f=\{te+z:z\in U,
        f(z)<t<b\}.
\]
For $\psi\in C_c^\infty(Z)$, extended by zero outside $Z$, one has the covector identity
\[
        -\int_{G_f} d\psi_x\dd x
        =J\int_U \psi(f(z)e+z)(\eta-df_z)\dd z.
\]
Here $df_z$, originally a covector on $W$, is used at differentiability points of $f$ and is extended to $\R^d$ by setting $df_z(e)=0$; on the null set where $f$ is not differentiable it may be chosen arbitrarily.  Moreover, if $f=f_E$ is obtained from a $K$-upper set $E\subseteq Z$ as in Lemma~\ref{lem:upper-lipschitz-epigraph}, then
\[
        \eta-df_z\in K^*
\]
for almost every $z\in U$ with $f(z)\in(a,b)$.
\end{lemma}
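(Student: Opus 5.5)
We prove the covector identity by testing it against the two pieces of the splitting $\R^d=\R e\oplus W$: first against $e$, then against an arbitrary $w\in W$. Both sides are linear in the test vector, so this suffices. We change variables $x=te+z$, so that $\dd x=J\,\dd t\,\dd z$ and $G_f$ corresponds to $\{(t,z):z\in U,\ f(z)<t<b\}$. Since $\psi$ has compact support in $Z$, Fubini's theorem applies to every integrand below.

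\emph{Test vector $e$.} We have $d\psi_x(e)=\partial_t\psi(te+z)$. For fixed $z$, the fundamental theorem of calculus on $(f(z),b)$ gives
\[
        \int_{f(z)}^b\partial_t\psi(te+z)\dd t=-\psi(f(z)e+z),
\]
because $\psi$ vanishes near $t=b$. When $f(z)=a$ the boundary term $\psi(ae+z)$ is also $0$, so the formula still holds. Integrating in $z$ and multiplying by $J$ gives $-\int_{G_f}d\psi_x(e)\dd x=J\int_U\psi(f(z)e+z)\dd z$. This matches the right side, since $(\eta-df_z)(e)=1$.

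\emph{Test vector $w\in W$.} Put $\Psi(z)=\int_{f(z)}^b\psi(te+z)\dd t$. Then $\Psi$ is Lipschitz on $U$ with compact support in $U$, because $\psi$ vanishes outside a compact subset of $Z$. By Rademacher's theorem, at almost every $z$ we have
\[
        \partial_w\Psi(z)=\int_{f(z)}^b\partial_w\psi(te+z)\dd t-\psi(f(z)e+z)\,df_z(w).
\]
To justify this, differentiate the integral with the moving lower limit. The chain rule holds at every differentiability point of $f$, since $\psi$ is smooth and $t\mapsto\psi(te+z)$ is continuous.

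A Lipschitz function with compact support satisfies $\int_U\partial_w\Psi\dd z=0$. One way to see this is Fubini along lines parallel to $w$, using absolute continuity on each line. Hence
\[
        -\int_{G_f}d\psi_x(w)\dd x=-J\int_U\psi(f(z)e+z)\,df_z(w)\dd z=J\int_U\psi(f(z)e+z)(\eta-df_z)(w)\dd z,
\]
using $\eta(w)=0$. The main technical point is this Lipschitz integration by parts together with the differentiation under the integral sign. The remaining steps are bookkeeping.

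\emph{Positivity.} Let $z\in U$ be a differentiability point of $f=f_E$ with $a<f(z)<b$. Let $k\in K$ and write $k=se+w$ with $s=\eta(k)\ge0$ and $w\in W$. We must show $s-df_z(w)\ge0$.

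Take small $h>0$ and a point $x=te+z\in E$ with $t$ slightly above $f(z)$. By upperness, $x+hk=(t+hs)e+(z+hw)\in E$, provided this point stays in $Z$; it does for small $h$, since $U$ is open and $f(z)<b$. The exact section statement of Lemma~\ref{lem:upper-lipschitz-epigraph} then gives $f(z+hw)\le t+hs$. Letting $t\downarrow f(z)$ yields $f(z+hw)\le f(z)+hs$. Dividing by $h$ and letting $h\downarrow0$ gives $df_z(w)\le s$.

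Therefore $(\eta-df_z)(k)\ge0$ for every $k\in K$, so $\eta-df_z\in K^*$. This holds at almost every $z$ with $f(z)\in(a,b)$, because Lipschitz functions are differentiable almost everywhere.
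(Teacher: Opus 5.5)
Your proof is correct and follows the paper's argument essentially step for step. You use Fubini in the splitting $\R e\oplus W$ and the fundamental theorem of calculus for the $e$-component. For the $W$-component you use the Lipschitz function $\Psi(z)=\int_{f(z)}^b\psi(te+z)\dd t$ with the Leibniz rule and the vanishing integral of its directional derivative. The positivity $\eta-df_z\in K^*$ comes from the same upperness and difference-quotient argument; the only cosmetic difference is that you test separately against $e$ and $w\in W$ rather than against a general $h=\tau e+w$.
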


\begin{proof}
By Rademacher's theorem \cite[Section~3.1.2]{EvansGariepy2015}, $f$ is differentiable almost everywhere.  Let $h\in\R^d$ and write it uniquely as
\[
        h=\tau e+w,
        \qquad \tau\in\R,
        \quad w\in W.
\]
Then $\tau=\eta(h)$.  Since $\dd x=J\,\dd t\dd z$, Fubini's theorem gives
\[
\begin{aligned}
 -\int_{G_f}d\psi_x(h)\dd x
 &=-J\int_U\int_{f(z)}^b
        \bigl(\tau\partial_t\psi(te+z)+d_z\psi(te+z)(w)\bigr)
        \dd t\dd z .
\end{aligned}
\]
The $t$-derivative term is
\[
\begin{aligned}
-J\tau\int_U\int_{f(z)}^b\partial_t\psi(te+z)\dd t\dd z
&=-J\tau\int_U\bigl(\psi(be+z)-\psi(f(z)e+z)\bigr)\dd z \\
&=J\tau\int_U\psi(f(z)e+z)\dd z,
\end{aligned}
\]
because $\psi$ has compact support in $Z$ and hence zero trace at the boundary level $t=b$.

For the $W$-term, set
\[
        H(z)=\int_{f(z)}^b\psi(te+z)\dd t.
\]
The function $H$ is Lipschitz on $U$ and has support compactly contained in $U$.  At almost every point where $f$ is differentiable, the Leibniz rule gives
\[
        dH_z(w)=\int_{f(z)}^b d_z\psi(te+z)(w)\dd t
        -\psi(f(z)e+z)df_z(w).
\]
The zero extension of $H$ to $W$ is compactly supported and Lipschitz.  Applying the one-dimensional fundamental theorem along lines in the direction $w$ gives
\[
        \int_U dH_z(w)\dd z=0.
\]
Therefore
\[
        \int_U\int_{f(z)}^b d_z\psi(te+z)(w)\dd t\dd z
        =\int_U\psi(f(z)e+z)df_z(w)\dd z.
\]
Combining the two terms yields
\[
        -\int_{G_f}d\psi_x(h)\dd x
        =J\int_U\psi(f(z)e+z)\bigl(\tau-df_z(w)\bigr)\dd z.
\]
Since $(\eta-df_z)(h)=\tau-df_z(w)$ by the chosen extension of $df_z$, and since $h$ was arbitrary, the covector identity follows.

Now assume $f=f_E$ comes from a $K$-upper set.  Let $z\in U$ be a point where $f$ is differentiable and $f(z)\in(a,b)$.  Take $h=\tau e+w\in K$, with $w\in W$.  Since $\eta\in K^*$,
\[
        \tau=\eta(h)\ge0.
\]
For $s>0$ sufficiently small, $z+sw\in U$ and $f(z)+s\tau<b$.  If $\delta>0$ is chosen so that $f(z)+\delta+s\tau<b$, then the exact section statement in Lemma~\ref{lem:upper-lipschitz-epigraph} gives
\[
        (f(z)+\delta)e+z\in E.
\]
Since $s h=s\tau e+sw\in K$, upperness gives
\[
        (f(z)+\delta+s\tau)e+(z+sw)\in E.
\]
Therefore
\[
        f(z+sw)\leq f(z)+\delta+s\tau.
\]
Letting $\delta\downarrow0$, dividing by $s>0$, and then letting $s\downarrow0$, gives
\[
        df_z(w)\leq\tau.
\]
Thus
\[
        (\eta-df_z)(h)=\tau-df_z(w)\ge0.
\]
Since $h\in K$ was arbitrary, $\eta-df_z\in K^*$.  This holds at every differentiability point with $f(z)\in(a,b)$, hence almost everywhere on that set.
\end{proof}

\subsection{Finite-valued monotone maps}

\begin{lemma}[Finite-valued monotone maps give rank-one-cone fluxes]\label{lem:finite-step-flux}
Let $\Omega\subseteq\R^d$ be open, let $Q:\Omega\to\R^d$ be finite-valued and $K$-monotone, and let
\[
        Z=(a,b)e+U
\]
be a cylinder with $\overline Z\subseteq\Omega$, where $U\subseteq W$ is bounded and open.  If $0\leq\psi\in C_c^\infty(Z)$, then $Q$ is measurable on $Z$ and
\[
        T_Q(\psi):=-\int_Z Q(x)\otimes d\psi_x\dd x
        \in\mathcal R_K.
\]
\end{lemma}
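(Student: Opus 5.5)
Let $q_1,\dots,q_N$ be the finitely many values of $Q$ on $Z$.  The plan is to write $Q$ as a finite telescoping sum of indicator functions of $K$-upper sets, each multiplied by a vector of $K$.  Lemmas~\ref{lem:upper-lipschitz-epigraph} and~\ref{lem:epigraph-integration} then show that each indicator contributes a covector lying in $K^*$.

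\textbf{Layer decomposition.}  Fix $\xi\in K^*$ and consider the scalar function $x\mapsto\xi(Q(x))$.  By $K$-monotonicity of $Q$ it is $K$-monotone as a real-valued map.  It takes finitely many values $s_1<\dots<s_p$, so on $Z$
\[
        \xi(Q(x))=s_1+\sum_{k=2}^p(s_k-s_{k-1})\one_{E_k}(x),
        \qquad E_k=\{x\in Z:\xi(Q(x))\ge s_k\}.
\]
Each $E_k$ is $K$-upper in $Z$.  By Lemma~\ref{lem:upper-lipschitz-epigraph} it is measurable, so $\xi\circ Q$ is measurable.  Letting $\xi$ range over a basis of $(\R^d)^*$ contained in $\Int K^*$ (Lemma~\ref{lem:standard-cone-facts}) shows that $Q$ is measurable.

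This scalar decomposition does not yet express the matrix in $\mathcal R_K$.  For that we need a vector decomposition.  Order the values so that the level sets can be handled via upper sets: for each value $q$, the set $U_q=\{x\in Z:Q(x)\ge_K q\}$ is $K$-upper.  We do not decompose by value, though.  Instead, we test the matrix against the dual cone.  By Lemma~\ref{lem:RKclosed} and bipolarity, it suffices to show $\langle\Theta,T_Q(\psi)\rangle\ge0$ for every linear functional $\Theta$ on $\End(\R^d)$ that is nonnegative on all $v\otimes\xi$ with $v\in K$ and $\xi\in K^*$.  That route is messy, so I would use a direct decomposition instead.

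\textbf{Direct decomposition.}  Choose a level filtration by upper sets.  Let $F_1,\dots,F_N$ be the level sets $\{Q=q_j\}$.  Since a finite poset can be linearly extended, fix an enumeration with the following property: if $q_i\le_K q_j$ and $q_i\ne q_j$, then $i<j$.  With respect to the values actually realised, a point in $F_j$ that is $K$-below a point in $F_i$ forces $q_j\le_K q_i$.  The difficulty is that the increments $q_j-q_{j-1}$ need not lie in $K$.  So pure telescoping over a linear extension fails, and this is the main obstacle.

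\textbf{Handling the obstacle.}  The remedy is to use pairwise structure.  Write
\[
        T_Q(\psi)=\sum_j q_j\otimes\Bigl(-\int_{F_j}d\psi\Bigr).
\]
Use $\sum_j(-\int_{F_j}d\psi)=0$, which holds by compact support.  The covectors $\omega_j=-\int_{F_j}d\psi$ are then fluxes across the interfaces between level sets.  For each up-closed family $S$ of indices, the union $E_S=\bigcup_{j\in S}F_j$ is a $K$-upper set, because its complement is down-closed under the induced order.  By Lemmas~\ref{lem:upper-lipschitz-epigraph} and~\ref{lem:epigraph-integration}, together with Lemma~\ref{lem:cone-valued-integral} applied with $\psi\ge0$, the flux $\Phi_S:=-\int_{E_S}d\psi$ lies in $K^*$.

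Now write $Q$ as a nonnegative combination of indicators of such upper unions, with coefficient vectors in $K$, plus a constant.  To achieve this, use a chain of up-closed sets $S_1\supset S_2\supset\cdots$ coming from a linear extension.  Then
\[
        Q=q_{\min}+\sum_k(q_{j_k}-q_{j_{k-1}})\one_{E_{S_k}}.
\]
The constant term contributes zero to $T_Q(\psi)$.  If the increments are not in $K$, refine instead via interfaces.  On a.e.\ point of the interface between $F_i$ and $F_j$, the epigraph structure shows that locally one set lies above the other, and monotonicity then gives an increment $q_j-q_i\in K$.  One then expresses $T_Q(\psi)$ as a sum of interface integrals
\[
        \int (q_j-q_i)\otimes\psi\,(\eta-df)\,\dd z.
\]
Here $q_j-q_i\in K$, and $\eta-df\in K^*$ by Lemma~\ref{lem:epigraph-integration}.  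Each such term lies in $\mathcal R_K$ by Lemma~\ref{lem:cone-valued-integral}.

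I expect the bookkeeping that localises the boundary flux to interfaces with a $K$-ordered pair of values to be the hardest step.  I would attempt it first by parametrising all the upper sets $E_S$ over the same cylinder, with graphs $f_S$, and comparing the values of $Q$ just below and just above each graph.
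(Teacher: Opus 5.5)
Your preliminary steps are sound and in fact match the paper's setup. Measurability via the $K$-upper superlevel sets is fine, and unions $E_S$ over up-closed index sets are $K$-upper with fluxes in $K^*$. A chain $S_1\supset S_2\supset\cdots$ from a linear extension of the values gives nested upper sets $E_1\supseteq\cdots\supseteq E_{N-1}$ with graph functions $f_1\le\cdots\le f_{N-1}$ over the same base $U$. (The paper gets its linear extension by ordering the values by some $\lambda\in\Int K^*$ that separates them.) Feeding the layer formula into Lemma~\ref{lem:epigraph-integration} gives
\[
T_Q(\psi)=J\int_U\sum_{i=1}^{N-1}\psi(f_i(z)e+z)\,(q_{i+1}-q_i)\otimes(\eta-df_{i,z})\dd z .
\]
Your proposal stops exactly here. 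The passage to ``interface integrals'' with increments in $K$ is asserted, not proved, and you yourself leave it as the unresolved hardest step. It is not bookkeeping; it is the heart of the lemma.

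The missing idea is this. Fix $z$ at which all $f_i$ are differentiable, and split $\{1,\dots,N-1\}$ into maximal blocks $r,\dots,s$ with $f_r(z)=\dots=f_s(z)=t$. Each $f_{i+1}-f_i$ is nonnegative and vanishes at $z$, so its differential there is zero. Hence $df_{r,z}=\dots=df_{s,z}$, and the block telescopes to the single term $\psi(te+z)(q_{s+1}-q_r)\otimes(\eta-df_{r,z})$. When $\psi(te+z)>0$ (so $t\in(a,b)$), pick $t_-<t<t_+$ strictly between the neighbouring thresholds $f_{r-1}(z)$ and $f_{s+1}(z)$. Then $Q(t_-e+z)=q_r$ and $Q(t_+e+z)=q_{s+1}$, so $q_{s+1}-q_r\in K$ by monotonicity along $e$. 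Also $\eta-df_{r,z}\in K^*$ by Lemma~\ref{lem:epigraph-integration}, and Lemma~\ref{lem:cone-valued-integral} finishes the proof.

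Without equality of differentials on coincidence sets, your interface terms are not well defined. Coincidences $f_i(z)=f_{i+1}(z)$ occur on sets of positive measure whenever $Q$ jumps across a hypersurface from $q_r$ directly to $q_{s+1}$, skipping intermediate values of the linear extension that are realised elsewhere in $Z$. There the individual summands $(q_{i+1}-q_i)\otimes(\eta-df_{i,z})$ need not lie in $\mathcal R_K$, since $q_{i+1}-q_i$ need not lie in $K$. Your remark that ``locally one set lies above the other'' does not attach a single $K^*$-covector to the interface; the equal-differential argument is what does.
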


\begin{proof}
Let $q_1,\ldots,q_N$ be the distinct values of $Q$ on $\Omega$.  If $N=1$, then $Q$ is constant and
\[
        T_Q(\psi)=-q_1\otimes\int_Z d\psi_x\dd x=0\in\mathcal R_K.
\]
Assume $N\geq2$.  Choose $\lambda\in\Int K^*$ such that the numbers $\lambda(q_i)$ are pairwise distinct.  This is possible because, for $q_i\ne q_j$, the condition $\lambda(q_i)=\lambda(q_j)$ defines a proper hyperplane in $(\R^d)^*$, and finitely many proper hyperplanes cannot cover the nonempty open set $\Int K^*$.  Relabel the values so that
\[
        \lambda(q_1)<\lambda(q_2)<\cdots<\lambda(q_N).
\]
Choose numbers $s_i$ satisfying
\[
        \lambda(q_i)<s_i<\lambda(q_{i+1}),
        \qquad 1\leq i<N,
\]
and define
\[
        E_i=\{x\in\Omega:\lambda(Q(x))>s_i\}.
\]
Each $E_i$ is $K$-upper in $\Omega$: if $x\in E_i$, $y\in\Omega$, and $y-x\in K$, then $Q(y)-Q(x)\in K$, so
\[
        \lambda(Q(y))\geq\lambda(Q(x))>s_i.
\]
Thus $E_i\cap Z$ is $K$-upper in $Z$.  By Lemma~\ref{lem:upper-lipschitz-epigraph}, let $f_i=f_{E_i\cap Z}:U\to[a,b]$.  Then $E_i\cap Z$ differs from
\[
        G_i=\{te+z:z\in U,
        f_i(z)<t<b\}
\]
by a null set, and for every fixed $z\in U$ and every $t\ne f_i(z)$,
\[
        te+z\in E_i
        \quad\Longleftrightarrow\quad
        t>f_i(z).
\]
Since
\[
        E_1\supseteq E_2\supseteq\cdots\supseteq E_{N-1},
\]
the threshold functions satisfy
\[
        f_1\leq f_2\leq\cdots\leq f_{N-1}.
\]
The sets $E_i\cap Z$ are measurable.  Hence the phase sets of $Q$ in $Z$ are measurable; with $E_0=\Omega$ and $E_N=\varnothing$,
\[
        \{Q=q_j\}\cap Z=(E_{j-1}\cap Z)\setminus(E_j\cap Z).
\]
Thus $Q$ is measurable on $Z$.  The layer formula holds pointwise on $Z$:
\[
        Q=q_1+\sum_{i=1}^{N-1}(q_{i+1}-q_i)\one_{E_i}.
\]
The constant term gives no contribution because $\int_Z d\psi_x\dd x=0$.  Replacing $E_i\cap Z$ by the null-equivalent epigraph $G_i$ and applying Lemma~\ref{lem:epigraph-integration} gives
\[
        T_Q(\psi)
        =J\int_U
        \sum_{i=1}^{N-1}
        \psi(f_i(z)e+z)(q_{i+1}-q_i)\otimes(\eta-df_{i,z})
        \dd z.
\]
It remains to show that the integrand belongs to $\mathcal R_K$ for almost every $z$.

Fix a point $z$ at which all $f_i$ are differentiable.  Partition $\{1,\ldots,N-1\}$ into maximal consecutive blocks on which the numbers $f_i(z)$ are equal.  Let $r,\ldots,s$ be such a block and write
\[
        f_r(z)=\cdots=f_s(z)=t.
\]
If $\psi(te+z)=0$, the block contributes zero.  Suppose $\psi(te+z)>0$.  Since $\supp\psi\subseteq Z$, this implies $t\in(a,b)$.  For $r\leq i<s$, the function $f_{i+1}-f_i$ is nonnegative and has value $0$ at $z$; differentiability at $z$ forces
\[
        df_{r,z}=df_{r+1,z}=\cdots=df_{s,z}.
\]
Therefore the block contribution is
\[
        \psi(te+z)(q_{s+1}-q_r)\otimes(\eta-df_{r,z}).
\]
By Lemma~\ref{lem:epigraph-integration}, $\eta-df_{r,z}\in K^*$.

We claim that $q_{s+1}-q_r\in K$.  By maximality of the block, choose $t_-<t<t_+$ so close to $t$ that
\[
        t_- > f_{r-1}(z)\quad(r>1),
        \qquad
        t_+ < f_{s+1}(z)\quad(s<N-1),
\]
with the evident omissions when $r=1$ or $s=N-1$.  The choices also avoid all threshold levels $f_i(z)$.  The exact section statement above gives the phase values
\[
        Q(t_-e+z)=q_r,
        \qquad
        Q(t_+e+z)=q_{s+1}.
\]
Since $(t_+-t_-)e\in K$ and $Q$ is $K$-monotone,
\[
        q_{s+1}-q_r=Q(t_+e+z)-Q(t_-e+z)\in K.
\]
Thus each nonzero block contribution is a nonnegative scalar multiple of a generator $v\otimes\xi$ with $v\in K$ and $\xi\in K^*$.  Therefore the whole integrand lies in $\mathcal R_K$ for almost every $z$.  Lemma~\ref{lem:cone-valued-integral} now gives $T_Q(\psi)\in\mathcal R_K$.
\end{proof}

\begin{proposition}[Euclidean finite-step obstruction]\label{prop:euclidean-obstruction}
Let $K\subseteq\R^d$ be a proper cone which is not simplicial.  For every nonempty open set $\Omega\subseteq\R^d$, there is no sequence of finite-valued $K$-monotone maps
\[
        Q_n:\Omega\to\R^d
\]
which converges to the identity uniformly on compact subsets of $\Omega$.
\end{proposition}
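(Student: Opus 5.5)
Argue by contradiction: assume finite-valued $K$-monotone maps $Q_n:\Omega\to\R^d$ converge to the identity uniformly on compact subsets of $\Omega$. The goal is to derive $I_d\in\mathcal R_K$, which contradicts Lemma~\ref{lem:identity-simplicial-euclidean} because $K$ is not simplicial.

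First build a cylinder and a test function. Pick $x_0\in\Omega$ and fix $e$, $\eta$, $W$ as in the preceding subsection. Choose $a<b$ and a bounded open ball $U\subseteq W$ such that $Z=(a,b)e+U$ contains $x_0$ and satisfies $\overline Z\subseteq\Omega$; this is possible since $\Omega$ is open. Fix a nonzero $0\leq\psi\in C_c^\infty(Z)$. Lemma~\ref{lem:finite-step-flux} then gives, for every $n$,
\[
        T_{Q_n}(\psi)=-\int_Z Q_n(x)\otimes d\psi_x\dd x\in\mathcal R_K .
\]

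Next pass to the limit. The set $\overline Z$ is compact, so $Q_n\to\id$ uniformly on $\overline Z$. In particular the $Q_n$ are uniformly bounded there, and each is measurable by Lemma~\ref{lem:finite-step-flux}. Since $d\psi$ is bounded and compactly supported, this yields
\[
        T_{Q_n}(\psi)\longrightarrow -\int_Z x\otimes d\psi_x\dd x .
\]
By Lemma~\ref{lem:RKclosed} the cone $\mathcal R_K$ is closed, so the limit also lies in $\mathcal R_K$.

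Now compute the limit by integration by parts. Apply it to a vector $h$: it gives $-\int x\, d\psi_x(h)\dd x=-\int x\,\partial_h\psi(x)\dd x$. Integrate by parts componentwise; there are no boundary terms because $\psi$ has compact support in $Z$. Since $\partial_h x=h$, the result is $\left(\int\psi\right)h$. Hence the limit operator is $\left(\int_Z\psi\dd x\right)I_d$. As $\int\psi>0$ and $\mathcal R_K$ is a cone, dividing by $\int\psi$ gives $I_d\in\mathcal R_K$. This contradicts Lemma~\ref{lem:identity-simplicial-euclidean}.

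The substantive work, namely the flux lying in $\mathcal R_K$, is already done in Lemma~\ref{lem:finite-step-flux}. Here the only points that need care are routine. One is the justification of the limit interchange, by dominated convergence or a direct uniform estimate $\|T_{Q_n}(\psi)-T_{\id}(\psi)\|\leq\sup_{\overline Z}\|Q_n-\id\|\int\|d\psi\|$. The other is the sign and convention check that $-\int x\otimes d\psi$ equals $+\left(\int\psi\right)I_d$ rather than its negative, since with the wrong sign the argument would fail.
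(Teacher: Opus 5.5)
Your proposal is correct and follows the paper's proof essentially step for step. You apply Lemma~\ref{lem:finite-step-flux} on a cylinder with $\overline Z\subseteq\Omega$, pass to the limit using the uniform estimate and closedness of $\mathcal R_K$, and identify the limit via integration by parts. The only cosmetic difference is that you divide by $\int\psi>0$ at the end, whereas the paper normalizes $\int_Z\psi\,dx=1$ from the start.
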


\begin{proof}
Suppose, to the contrary, that such a sequence $(Q_n)$ exists.  By Lemma~\ref{lem:identity-simplicial-euclidean},
\[
        I_d\notin\mathcal R_K.
\]
Because $\Omega$ is nonempty and open, choose a cylinder
\[
        Z=(a,b)e+U
\]
with $U\subseteq W$ bounded and open and $\overline Z\subseteq\Omega$.  Choose $0\leq\psi\in C_c^\infty(Z)$ with
\[
        \int_Z\psi(x)\dd x=1.
\]
For every $n$, Lemma~\ref{lem:finite-step-flux} gives
\[
        A_n:=-\int_Z Q_n(x)\otimes d\psi_x\dd x\in\mathcal R_K.
\]
Uniform convergence on the compact set $\supp\psi$ implies
\[
\begin{aligned}
\left\|A_n+\int_Z x\otimes d\psi_x\dd x\right\|
&=\left\|\int_Z (x-Q_n(x))\otimes d\psi_x\dd x\right\|  \\
&\leq
\sup_{x\in\supp\psi}\|Q_n(x)-x\|
\int_Z\|d\psi_x\|\dd x
\longrightarrow0.
\end{aligned}
\]
Since $\mathcal R_K$ is closed by Lemma~\ref{lem:RKclosed}, it follows that
\[
        -\int_Z x\otimes d\psi_x\dd x\in\mathcal R_K.
\]
For every $h\in\R^d$, coordinatewise integration by parts gives
\[
\begin{aligned}
        \left(-\int_Z x\otimes d\psi_x\dd x\right)(h)
        &=-\int_Z x\,d\psi_x(h)\dd x  \\
        &=h\int_Z\psi(x)\dd x
        =h.
\end{aligned}
\]
There is no boundary term because $\psi$ has compact support in $Z$.  Hence
\[
        -\int_Z x\otimes d\psi_x\dd x=I_d,
\]
contradicting $I_d\notin\mathcal R_K$.
\end{proof}

\begin{corollary}[Coordinate-free form]\label{cor:vector-space-obstruction}
Let $V$ be a finite-dimensional real vector space and let $C\subseteq V$ be a non-simplicial proper cone.  For every nonempty open set $\Omega\subseteq V$, there is no sequence of finite-valued $C$-monotone maps
\[
        Q_n:\Omega\to V
\]
which converges to the identity uniformly on compact subsets of $\Omega$.
\end{corollary}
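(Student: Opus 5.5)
The plan is to reduce Corollary~\ref{cor:vector-space-obstruction} to Proposition~\ref{prop:euclidean-obstruction} by a linear change of coordinates. Let $d=\dim V$ and fix a linear isomorphism $\Phi:V\to\R^d$. Put $K=\Phi(C)$. Since $\Phi$ is a linear homeomorphism, $K$ is closed, convex and a cone. It is pointed because $K\cap(-K)=\Phi(C\cap(-C))=\{0\}$, and generating because $K-K=\Phi(C-C)=\R^d$. So $K$ is proper. It is not simplicial: if $K=\cone\{b_1,\ldots,b_d\}$ for a basis $(b_j)$ of $\R^d$, then $C=\cone\{\Phi^{-1}b_1,\ldots,\Phi^{-1}b_d\}$ with $(\Phi^{-1}b_j)$ a basis of $V$, which contradicts the hypothesis.

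Next, assume for contradiction that finite-valued $C$-monotone maps $Q_n:\Omega\to V$ converge to the identity uniformly on compact subsets of $\Omega$. Set $\Omega'=\Phi(\Omega)$, which is nonempty and open in $\R^d$, and define
\[
        Q_n'=\Phi\circ Q_n\circ\Phi^{-1}:\Omega'\to\R^d .
\]
Each $Q_n'$ is finite-valued, since its image is $\Phi$ of a finite set. Each is $K$-monotone: if $y'-x'\in K$, then $\Phi^{-1}y'-\Phi^{-1}x'\in C$, so $Q_n(\Phi^{-1}y')-Q_n(\Phi^{-1}x')\in C$, and applying $\Phi$ gives $Q_n'(y')-Q_n'(x')\in K$.

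For uniform convergence, take a compact $P'\subseteq\Omega'$. Then $P=\Phi^{-1}(P')$ is compact in $\Omega$. All norms on the finite-dimensional space $V$ are equivalent and $\Phi$ is bounded, so
\[
        \sup_{x'\in P'}\|Q_n'(x')-x'\|\le\|\Phi\|\sup_{x\in P}\|Q_n(x)-x\|\to0 .
\]
Thus $Q_n'\to\id$ uniformly on compact subsets of $\Omega'$, which contradicts Proposition~\ref{prop:euclidean-obstruction} applied to $K$ and $\Omega'$.

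None of these steps is a real obstacle. The one point that needs care is that simpliciality and properness transfer under $\Phi$ in both directions, which holds because $\Phi$ maps bases to bases and is a linear homeomorphism. The result also does not depend on the auxiliary norm or Euclidean structure fixed on $V$, because of norm equivalence.
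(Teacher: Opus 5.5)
Your proposal is correct and follows essentially the same route as the paper: transport $C$ to $K=A(C)\subseteq\R^d$ by a linear isomorphism, conjugate the maps, and contradict Proposition~\ref{prop:euclidean-obstruction}. You also spell out checks the paper leaves implicit: that properness and non-simpliciality transfer to $K$, that $K$-monotonicity and finite range are preserved, and the operator-norm estimate for locally uniform convergence.
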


\begin{proof}
Choose a linear isomorphism $A:V\to\R^d$ and put $K=A(C)$.  Then $K$ is a non-simplicial proper cone.  If such maps $Q_n$ existed on $\Omega$, then
\[
        \widetilde Q_n(y)=A Q_n(A^{-1}y),
        \qquad y\in A\Omega,
\]
would be finite-valued $K$-monotone maps on the nonempty open set $A\Omega\subseteq\R^d$.  Uniform convergence on compact subsets is preserved under the linear isomorphism $A$, so $\widetilde Q_n\to\id$ locally uniformly on $A\Omega$.  This contradicts Proposition~\ref{prop:euclidean-obstruction}.
\end{proof}

\section{The simplicial classification and consequences}\label{sec:classification}

\subsection{The simplicial case}

It remains to show that simplicial cones really give RB-domains.  We do this directly by constructing finite-range deflations on the model domain $(-\R_+^d)_\bot$ and then applying Jung's theorem.

\begin{lemma}\label{lem:simplicial-RB-model}
The dcpo $(-\R_+^d)_\bot$, ordered coordinatewise on $-\R_+^d$ and with a new least element adjoined, is an RB-domain.
\end{lemma}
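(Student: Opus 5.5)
We construct explicit deflations and then apply Theorem~\ref{thm:Jung-deflation}. Since $(-\R_+^d)_\bot$ is a product-like structure, the plan is to build a one-dimensional rounding map on $(-\infty,0]$ and apply it coordinatewise. Sending to $\bot$ any point too far down keeps the image finite.

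For $n\ge1$ we define $\rho_n:(-\infty,0]\to\{-n,-n+2^{-n},\dots,-2^{-n},0\}$ on the range $s>-n$ by rounding down to the grid $2^{-n}\mathbb Z$, but with a ``strict'' rule: $\rho_n(s)=2^{-n}(\lceil 2^n s\rceil-1)$. This choice makes $\rho_n(s)<s$, and $\rho_n$ is lower semicontinuous in the right sense, namely continuous from the left along increasing sequences. Floor would fail Scott continuity at grid points, since increasing sequences approach a grid point from below. The ceiling-minus-one rule is left-continuous and nondecreasing, so it preserves suprema of increasing nets in $\R$. Then set
\[
 r_n(\bot)=\bot,\qquad r_n(x)=\begin{cases}(\rho_n(x_1),\dots,\rho_n(x_d)),& \min_i x_i>-n+2^{-n},\\ \bot,&\text{otherwise.}\end{cases}
\]
Care is needed at the cut-off. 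The region $\{\min_i x_i>c\}$ is an upper set, and it is Scott-open because directed suprema are Euclidean limits (Lemma~\ref{lem:directed-convergence}) and the set is Euclidean-open. Hence $r_n$ is monotone: points outside the region go to $\bot$, and the region is upward closed. The image is a finite set, namely the grid points in $[-n,0]^d$ together with $\bot$, and $r_n\le\id$ because $\rho_n(s)\le s$ coordinatewise. Also $\rho_n(s)\ge -n$ whenever $s>-n+2^{-n}$, so the values stay on the finite grid.

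To verify Scott continuity, take a directed $A$ with supremum $z\in-\R_+^d$. If $z$ lies outside the open region, then $A$ lies outside it too (monotone region), so both sides equal $\bot$. Otherwise a cofinal tail of $A$ lies in the region and converges coordinatewise to $z$ from below. Each coordinate net increases to $z_i$. Since $\rho_n$ is nondecreasing, takes finitely many values, and is left-continuous, the values $\rho_n(a_i)$ are eventually equal to $\rho_n(z_i)$. This step is the main point to check carefully. It uses that $\{s:\rho_n(s)=\rho_n(z_i)\}$ is an interval of the form $(g,g+2^{-n}]$ containing $z_i$, open on the left.

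For directedness and the identity supremum, the grids $2^{-n}\mathbb Z$ refine and the thresholds decrease. We check that $\rho_n\le\rho_{n+1}$: the largest point of $2^{-n}\mathbb Z$ strictly below $s$ is at most the largest point of the finer grid strictly below $s$. Combined with the shrinking $\bot$-region, this gives $r_n\le r_{n+1}$ pointwise. Finally $0\le s-\rho_n(s)\le2^{-n}$, so $r_n(x)\to x$ and the increasing sequence has supremum $x$. By Theorem~\ref{thm:Jung-deflation}, $(-\R_+^d)_\bot$ is an RB-domain.
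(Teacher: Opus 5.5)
Your proof is correct and follows the paper's argument. Your $\rho_n(s)=2^{-n}(\lceil 2^n s\rceil-1)$ is exactly the paper's $s_n$, the largest point of the mesh-$2^{-n}$ grid strictly below $s$, applied coordinatewise with a cut-off to $\bot$. The checks of finite image, monotonicity, Scott-continuity via the left-open constancy intervals $(g,g+2^{-n}]$, and $r_n\uparrow\id$ match the paper step for step. The only difference is cosmetic: your cut-off threshold is $-n+2^{-n}$ instead of $-n$, which is harmless because these thresholds still decrease in $n$.
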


\begin{proof}
Let $D=(-\R_+^d)_\bot$.  For $n\geq1$, put $\delta_n=2^{-n}$.  For $t\in(-\infty,0]$ with $t>-n$, define
\[
        s_n(t)=\max\{k\delta_n:k\in\mathbb Z,
        -n\leq k\delta_n<t\}.
\]
The set in the maximum is nonempty because it contains $-n=(-n2^n)\delta_n$, and it is finite because all its elements lie in $[-n,0)$.  Equivalently, $s_n(t)$ is the largest point of the finite grid
\[
        \{-n,-n+\delta_n,\ldots,-\delta_n\}
\]
which is strictly smaller than $t$.  Thus $s_n(t)\le t$, and $s_n$ is monotone on $(-n,0]$.

Define $r_n:D\to D$ by $r_n(\bot)=\bot$ and, for $x=(x_1,\ldots,x_d)\in-\R_+^d$,
\[
        r_n(x)=
        \begin{cases}
        (s_n(x_1),\ldots,s_n(x_d)),& x_i>-n\text{ for all }i,\\[0.6ex]
        \bot,& x_i\leq -n\text{ for some }i.
        \end{cases}
\]
Each $r_n$ has finite image and satisfies $r_n\leq\id_D$.  It is monotone because the threshold condition is upward closed and each scalar map $s_n$ is monotone.

We check Scott-continuity.  Let $A\subseteq D$ be directed with supremum $z$.  If $z=\bot$, then $A=\{\bot\}$.  Suppose $z\in-\R_+^d$.  If some coordinate $z_i\le -n$, then every non-bottom element of $A$ has $i$-th coordinate at most $-n$, so $r_n[A]=\{\bot\}$ and $r_n(z)=\bot$.  If every coordinate of $z$ is $>-n$, then a cofinal tail of $A$ lies in the open box
\[
        \{x\in-\R_+^d:x_i>-n\text{ for all }i\}.
\]
On that tail $r_n$ is the coordinatewise map induced by $s_n$.  The scalar map $s_n$ preserves increasing suprema: it is constant on each interval $(k\delta_n,(k+1)\delta_n]$ with value $k\delta_n$, including the lower value at the grid breakpoint.  Hence the coordinatewise supremum of $r_n[A]$ is $r_n(z)$.  Therefore $r_n$ is Scott-continuous.

The sequence $(r_n)$ is increasing pointwise.  The threshold $-n$ moves downward as $n$ increases, and the grid of mesh $\delta_{n+1}$ refines the grid of mesh $\delta_n$; hence the largest grid point strictly below a fixed active coordinate cannot decrease.  Finally, for every $x\in-\R_+^d$, all coordinates are above $-n$ for large $n$, and the mesh $\delta_n$ tends to $0$.  Therefore
\[
        \sup_n r_n(x)=x,
        \qquad x\in-\R_+^d,
\]
and also $\sup_n r_n(\bot)=\bot$.  Thus $(r_n)$ is a directed family of deflations with pointwise supremum $\id_D$.  By Theorem~\ref{thm:Jung-deflation}, $D$ is an RB-domain.
\end{proof}

\begin{proposition}\label{prop:simplicial-RB}
If $C$ is simplicial, then $D_C$ is an RB-domain.
\end{proposition}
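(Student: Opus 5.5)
The plan is to reduce to the model domain of Lemma~\ref{lem:simplicial-RB-model} by an order isomorphism.  Since $C$ is simplicial, we fix a basis $v_1,\ldots,v_d$ of $V$ with $C=\cone\{v_1,\ldots,v_d\}$.  We then define the linear isomorphism $A:\R^d\to V$ by $A(t_1,\ldots,t_d)=\sum_j t_jv_j$.  Because $v_1,\ldots,v_d$ is a basis, $A$ is bijective, and by the very definition of simplicial we have $A(\R_+^d)=C$.  Consequently $A(-\R_+^d)=-C$.  We also have
\[
        s\leq t\ \text{coordinatewise}\iff t-s\in\R_+^d\iff A(t)-A(s)\in C\iff A(s)\leq_C A(t).
\]
So $A$ restricts to an order isomorphism $-\R_+^d\to -C$.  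Extending by $\bot\mapsto\bot$ gives an order isomorphism $\widehat A:(-\R_+^d)_\bot\to D_C$.

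Order isomorphisms between dcpos preserve directed suprema in both directions, so $\widehat A$ and $\widehat A^{-1}$ are Scott-continuous.  Given the directed family of deflations $(r_n)$ on $(-\R_+^d)_\bot$ from Lemma~\ref{lem:simplicial-RB-model}, we set
\[
        \widetilde r_n=\widehat A\circ r_n\circ\widehat A^{-1}.
\]
Each $\widetilde r_n$ is Scott-continuous as a composite of Scott-continuous maps.  It has finite image because $r_n$ does, and $\widetilde r_n\leq\id_{D_C}$ because $r_n\leq\id$ and $\widehat A$ is monotone.  The family $(\widetilde r_n)$ is directed (in fact increasing), since conjugation by an order isomorphism preserves the pointwise order.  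Finally,
\[
        \sup_n\widetilde r_n(x)=\widehat A\bigl(\sup_n r_n(\widehat A^{-1}x)\bigr)=x,
\]
because $\widehat A$ preserves suprema.  Theorem~\ref{thm:Jung-deflation} then yields that $D_C$ is an RB-domain.

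There is essentially no obstacle here.  The only point needing care is to note that $A$ maps $-\R_+^d$ onto $-C$, not merely into it, and that it reflects the order as well as preserving it; both follow from linear independence of the $v_j$.  Alternatively, one could avoid Jung's deflation theorem at this step and simply observe that RB-domains are closed under isomorphism, but transporting the deflations keeps the argument self-contained.
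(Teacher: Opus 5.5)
Your proposal is correct and follows the same route as the paper: identify $D_C$ with $(-\R_+^d)_\bot$ via the coordinate isomorphism for the basis $v_1,\ldots,v_d$, then apply Lemma~\ref{lem:simplicial-RB-model}. The only difference is that you transport the deflations explicitly through the isomorphism, where the paper simply invokes invariance of RB-domains under order isomorphism; this is a harmless and slightly more self-contained elaboration of the same step.
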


\begin{proof}
Write $C=\cone\{v_1,\ldots,v_d\}$ for a basis $v_1,\ldots,v_d$ of $V$.  Define the linear isomorphism $A:V\to\R^d$ by
\[
        A\left(\sum_{i=1}^d t_i v_i\right)=(t_1,\ldots,t_d).
\]
Then $A(C)=\R_+^d$, and
\[
        x\leq_C y
        \quad\Longleftrightarrow\quad
        A x\leq_{\R_+^d} A y.
\]
Thus $A$ restricts to an order isomorphism $-C\cong-\R_+^d$ and extends to an order isomorphism
\[
        D_C\cong(-\R_+^d)_\bot
\]
by sending $\bot$ to $\bot$.  RB-domains are invariant under order isomorphism, so the result follows from Lemma~\ref{lem:simplicial-RB-model}.
\end{proof}

\subsection{Proof of the main theorem and consequences}

\begin{proof}[Proof of Theorem~\ref{thm:intro-main}]
The FS assertion is Proposition~\ref{prop:FS}.  Suppose that $D_C$ is an RB-domain.  We prove that $C$ is simplicial.

Assume, for a contradiction, that $C$ is not simplicial.  Since $\Int C$ is a nonempty open subset of $V$, we may choose a compact full-dimensional parallelotope
\[
        P\subseteq\Int C
\]
with nonempty interior $\Omega=\Int P$.  Fix $a\in\Int C$ and choose any sequence $\varepsilon_k\downarrow0$.  Proposition~\ref{prop:Qapprox} gives finite-valued $C$-monotone maps
\[
        Q_k:P\to C
\]
such that
\[
        c\leq_C Q_k(c)\leq_C c+\varepsilon_k a
        \qquad(c\in P).
\]
The norm estimate in Proposition~\ref{prop:Qapprox} yields
\[
        \sup_{c\in P}\|Q_k(c)-c\|\leq M_a\varepsilon_k\longrightarrow0.
\]
Therefore the restrictions $Q_k\restr\Omega:\Omega\to V$ are finite-valued $C$-monotone maps converging to the identity uniformly on compact subsets of $\Omega$.  This contradicts Corollary~\ref{cor:vector-space-obstruction}, because $C$ was assumed non-simplicial.  Hence $C$ is simplicial.

Conversely, if $C$ is simplicial, Proposition~\ref{prop:simplicial-RB} shows that $D_C$ is an RB-domain.
\end{proof}

\begin{corollary}\label{cor:FSneqRB}
For every non-simplicial proper cone $C$, the domain $D_C=(-C)\cup\{\bot\}$ is an FS-domain but not an RB-domain.
\end{corollary}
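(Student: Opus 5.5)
This corollary is essentially a repackaging of results already established, so the plan is to assemble them rather than prove anything new. Let $C$ be a proper cone that is not simplicial.

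\emph{The FS half.} I would cite Proposition~\ref{prop:FS}, which shows that $D_C$ is an FS-domain for every proper cone, with no hypothesis on simpliciality.

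\emph{The non-RB half.} I would invoke the forward direction of Theorem~\ref{thm:intro-main}: if $D_C$ were an RB-domain, then $C$ would be simplicial. Concretely, the chain is:
\begin{enumerate}[label=\textup{(\arabic*)}]
\item Assume $D_C$ is RB.
\item Proposition~\ref{prop:Qapprox} produces finite-valued $C$-monotone maps on a compact parallelotope $P\subseteq\Int C$. These converge uniformly to the identity at rate $M_a\varepsilon_k$.
\item Restricting to $\Omega=\Int P$ contradicts Corollary~\ref{cor:vector-space-obstruction}.
\item That corollary in turn rests on the identity criterion, Lemma~\ref{lem:identity-simplicial-euclidean}: $I_d\notin\mathcal R_K$ for a non-simplicial $K$.
\end{enumerate}
Since $C$ is assumed non-simplicial, $D_C$ cannot be RB.

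There is no genuine obstacle here, because all the analytic work (the Lipschitz epigraph representation and the rank-one flux lemma) has already been done. The only care needed is to check that the hypotheses match. ``Proper'' in Definition~\ref{def:cone-terminology} is exactly the standing hypothesis of Theorem~\ref{thm:intro-main}. The theorem's equivalence is stated for arbitrary finite-dimensional $V$, with the change of coordinates already handled in Corollary~\ref{cor:vector-space-obstruction}. As an illustration, I would remark that the Lorentz cone $L_3$ qualifies: its base is a disc, which is not a simplex, so $(-L_3)_\bot\in\mathbf{FS}\setminus\mathbf{RB}$.
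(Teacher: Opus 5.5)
Your proposal is correct and matches the paper's proof: the FS half is Proposition~\ref{prop:FS}, and the non-RB half is the forward direction of Theorem~\ref{thm:intro-main}. Your unpacked chain (Proposition~\ref{prop:Qapprox} on a parallelotope $P\subseteq\Int C$, restriction to $\Omega=\Int P$, contradiction with Corollary~\ref{cor:vector-space-obstruction} via the identity criterion) is exactly how the paper proves that direction of the theorem.
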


\begin{proof}
The FS statement is Proposition~\ref{prop:FS}.  If $D_C$ were an RB-domain, Theorem~\ref{thm:intro-main} would imply that $C$ is simplicial, contrary to the hypothesis.
\end{proof}

\begin{example}[The Lorentz cone]\label{ex:Lorentz}
Let
\[
        L_3=\{(t,x,y)\in\R^3:t\geq (x^2+y^2)^{1/2}\}.
\]
The section $\{(1,x,y):x^2+y^2\leq1\}$ is a disk, not a simplex.  Hence $L_3$ is not simplicial, and
\[
        (-L_3)_\bot
\]
is an FS-domain but not an RB-domain.  This is precisely the ice-cream-cone case mentioned in \cite[Problem 5.5]{Keimel2009}.
\end{example}

\begin{corollary}[Polyhedral cones]\label{cor:polyhedral}
If $C\subseteq V$ is a polyhedral proper cone, then $D_C$ is an RB-domain if and only if $C$ has exactly $\dim V$ extreme rays.
\end{corollary}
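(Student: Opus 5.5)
By Theorem~\ref{thm:intro-main}, $D_C$ is an RB-domain if and only if $C$ is simplicial. The corollary therefore reduces to a purely convex-geometric statement: a polyhedral proper cone $C\subseteq V$, with $d=\dim V$, is simplicial exactly when it has $d$ extreme rays. I would prove the two directions separately, working throughout with the compact base $K_\alpha=\{c\in C:\alpha(c)=1\}$ for a fixed $\alpha\in\Int C^*$ from Lemma~\ref{lem:standard-cone-facts}.

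The basic correspondence is between extreme rays of $C$ and extreme points of $K_\alpha$. If $c$ spans an extreme ray, then $c/\alpha(c)$ is an extreme point of $K_\alpha$. Conversely, if $k$ is an extreme point of $K_\alpha$ and $k=c_1+c_2$ with $c_i\in C$, then $k=\alpha(c_1)\tfrac{c_1}{\alpha(c_1)}+\alpha(c_2)\tfrac{c_2}{\alpha(c_2)}$ is a convex combination of points of $K_\alpha$ (discarding zero summands). Hence both $c_i$ are multiples of $k$, so $k$ spans an extreme ray. Since $C$ is polyhedral, $K_\alpha$ is a polytope, so it has finitely many extreme points $p_1,\dots,p_m$ and $K_\alpha=\conv\{p_1,\ldots,p_m\}$. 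It follows that $C=\cone\{p_1,\dots,p_m\}$ and that $C$ has exactly $m$ extreme rays.

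For the direction "simplicial $\Rightarrow$ $d$ extreme rays", write $C=\cone\{v_1,\dots,v_d\}$ with $v_1,\dots,v_d$ a basis. Each ray $\R_+v_i$ is extreme: if $v_i=c+c'$ with $c,c'\in C$, compare the nonnegative coordinates in the basis; all coordinates other than the $i$-th must vanish for both $c$ and $c'$. Any other extreme ray is spanned by some $c=\sum_j t_jv_j$ with $t_j\ge 0$, and extremality forces $c$ to be a multiple of a single $v_j$. So $C$ has exactly $d$ extreme rays.

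For the direction "$d$ extreme rays $\Rightarrow$ simplicial", let $p_1,\ldots,p_d$ be representatives of the extreme rays. From the correspondence above, $C=\cone\{p_1,\ldots,p_d\}$. Since $C$ is generating, $V=C-C=\operatorname{span}\{p_1,\ldots,p_d\}$, so these $d$ vectors span the $d$-dimensional space $V$. They therefore form a basis, and $C$ is simplicial.

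The only step that needs care is the use of polyhedrality to assert $C=\cone(\text{extreme rays})$; this is where the hypothesis enters. I would justify it by Minkowski's theorem applied to the compact convex polytope $K_\alpha$, or, for a finitely generated cone, by noting that a minimal generating set consists of extreme rays. Everything else is elementary linear algebra.
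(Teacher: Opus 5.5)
Your proposal is correct and follows the paper's route: you reduce via Theorem~\ref{thm:intro-main} to the convex-geometric fact that a polyhedral proper cone is simplicial if and only if it has exactly $\dim V$ extreme rays, which the paper simply asserts and which you verify carefully through the compact base $K_\alpha$. As a minor remark, Minkowski's theorem gives $K_\alpha=\conv(\ext K_\alpha)$ for any compact convex base, so your argument in fact shows that the polyhedrality hypothesis is not needed for this equivalence.
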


\begin{proof}
A finite-dimensional proper polyhedral cone is simplicial if and only if its extreme rays are generated by a basis of $V$, equivalently if and only if it has exactly $\dim V$ extreme rays.  Apply Theorem~\ref{thm:intro-main}.
\end{proof}

\end{document}